\documentclass{amsart}
\pagestyle{plain}
\footskip=40pt
\usepackage{titlesec}
\numberwithin{equation}{section}
\titleformat{\section}[block]{\Large\bfseries\filcenter}{\thesection.}{1em}{}
\titleformat{\subsection}[runin]{\Large\mdseries\filright}{\indent\thesubsection}{0.2cm}{}
\renewcommand{\thesubsection}{(\alph{subsection})}
\newtheoremstyle{theorem}{}{}{\itshape}{}{\scshape}{}{5pt}{}
\theoremstyle{theorem}
\newtheorem*{mainthm}{Main Theorem}
\newtheorem{thm}{Theorem}[section]

\newtheorem{lem}[thm]{Lemma}
\title{starshaped locally convex hypersurfaces with prescribed curvature and boundary}
\author{Chenyang Su}
\begin{document}
\large
\maketitle
\vspace{-6mm}
\begin{abstract}
In this paper we find strictly locally convex hypersurfaces in $\mathbb{R}^{n+1}$ with prescribed curvature and boundary. The main result is that if the given data admits a strictly locally convex radial graph as a subsolution, we can find a radial graph realizing the prescribed curvature and boundary. As an application we show any smooth domain on the boundary of a compact strictly convex body can be deformed to a smooth hypersurface with the same boundary (inside the convex body) and realizing any prescribed curvature function smaller than the curvature of the body. \end{abstract}

\section{Introduction}
In this paper we study a classical problem in differential geometry: given a collection $\Gamma=(\Gamma_1, \ldots, \Gamma_k) $ of embedded
codimension 2 submanifolds of  $\mathbb{R}^{n+1}$, find a hypersurface $\Sigma$ with prescribed curvature and boundary data. That is, we seek to solve 
\begin{equation} 
f(\kappa_1(\Sigma), \ldots, \kappa_n(\Sigma))=\psi(X),\,\, \partial \Sigma=\Gamma~,
\end{equation}
where $X$ is the position vector of $\Sigma$.

 The solvability of the problem in this generality still remains quite open. However if we confine ourselves to strictly locally convex hypersurfaces, i.e., oriented hypersurfaces with positive principal curvatures everywhere, the theory of fully nonlinear elliptic PDE  becomes a powerful tool to study the solvability  of this geometric problem.
However, even in the strictly locally convex case, we still have to deal with the geometric nature of this problem carefully. The geometry of the prescribed boundary plays a crucial role in the solvability. For a K-surface (surface of constant Gauss curvature $K$ in $\mathbb{R}^3$), an obvious necessary condition for the solvability is that the boundary curve cannot have inflection point. In higher dimension, this corresponds to the condition that the second fundamental form of the boundary is nondegenerate everywhere. These conditions are very far from being sufficient as there are subtle obstructions to solvability; for more discussion, see \cite{hoffmanrosenbergspruck}, \cite{rosenberg} and \cite{ghomiphdthesis}. One method to avoid the obstructions of the boundary comes from a PDE point of view. In \cite{radialgs} and \cite{Guannonconvex}, Guan and Spruck (extending the work of Caffarelli, Nirenberg and Spruck \cite{CNS1}) proved the existence of a smooth solution for  general Monge-Amp\`ere equations on smooth domains with multiple boundary components and  arbitrary geometry, if there exists a  subsolution.  The authors also proved the same theorem for radial graphs over a domain in $\mathbb{S}^n$. With the help of Perron Method, in \cite{gsgaussperron}, \cite{trudingerwang}, \cite{locallyconvexgs}, \cite{smith1},  \cite{clarkesmith} and \cite{smith2} the authors further extended these results to its full parametric generality for strictly locally convex hypersurfaces of constant curvature.\\

In this paper we continue the study of the locally convex case but allow variable prescribed curvature. Let $\Omega$ be a smooth domain in $\mathbb{S}^n$  with boundary $\partial\Omega$ (which may have multiple components), $\psi$ a positive smooth function on the cone $\Lambda=\left\{X\in\mathbb{R}^{n+1}|\frac{X}{\|X\|}\in\bar\Omega\right\}$ and $\phi$ a positive smooth function on $\partial\Omega$. We seek a smooth, strictly locally convex hypersurface $\Sigma$ that can be represented as a radial graph
  \begin{equation}
  X(x)=\rho(x)x,\quad\rho>0,\:x\in\bar\Omega,
  \end{equation}
  with prescribed curvature
  \begin{equation}\label{0.2}
  f(\kappa_\Sigma[X])=\psi(X),
  \end{equation}
  and boundary values
  \begin{equation}\label{0.3}
  X(x)=\phi(x)x,\quad x\in\partial\Omega,
  \end{equation}
  where $\kappa_\Sigma[X]=(\kappa_1,...,\kappa_n)$ denotes the principal curvatures of $\Sigma$ at $X$ with respect to the inward unit normal. The  curvature function  $f(\lambda)$ is a positive smooth symmetric function defined on the convex cone $\Gamma_n^+\equiv\{\lambda\in\mathbb{R}^n: \text{each component}\: \lambda_i>0\}$ satisfying the fundamental structure conditions
  \begin{equation}\label{0.4}
  f_i(\lambda)\equiv\frac{\partial f(\lambda)}{\partial{\lambda}_i}>0\quad\text{in}\:\Gamma_n^+,\: 1\leq i\leq n,
  \end{equation}
  and
  \begin{equation}\label{0.5}
  f~ \text{is a concave function}.
  \end{equation}
  In addition, $f$ is assumed to satisfy 
  \begin{equation}\label{0.6}
  f=0\text{ on }\partial\Gamma_n^+,
  \end{equation}
  \begin{equation}\label{0.7}
  \sum f_i(\lambda)\lambda_i\geq\sigma_0 \quad\text{ on } \{\lambda\in\Gamma_n^+: \psi_0\leq f(\lambda)\leq\psi_1\},
  \end{equation}
  for any $\psi_1>\psi_0>0$, where $\sigma_0$ is a positive constant depending on $\psi_0$ and $\psi_1$. In addition we will need the following more technical assumption: for every $C>0$ and every compact set $E$ in $\Gamma_n^+$, there exists $R=R(E, C)>0$ such that
  \begin{equation}\label{0.8}
  f(\lambda_1,...,\lambda_{n-1},\lambda_n+R)\geq C\quad\text{ for all }\lambda\in E.
  \end{equation}
  Curvature functions satisfying \eqref{0.4}-\eqref{0.8} include a large family of examples, which can be found in \cite{locallyconvexgs}. However we shall point out the curvature quotient $S_{n,k}^{\frac{1}{n-k}}=(S_n/S_k)^{\frac{1}{n-k}}$ ($S_k$ is the k-th elementary symmetric polynomial) does not satisfy \eqref{0.8}.
  We will also assume that
  \begin{equation}\label{0.9}
  \Omega\text{ does not contain any hemisphere}
  \end{equation}
  and there exists a smooth, strictly locally convex radial graph $\Sigma'$: $\bar X(x)=\bar\rho(x)x$ over $\bar\Omega$ satisfying
  \begin{equation}\label{0.10}
  \begin{aligned}
  f(\kappa_{\Sigma'}[\bar X])&>\psi(\bar X) \quad \text{on} \,\Omega,\\
              \bar\rho&=\phi\quad\text{on}\:\partial\Omega.
  \end{aligned}
  \end{equation}
  Our main theorem is stated as follows:
  \begin{mainthm}\label{0.11}
  Under conditions \eqref{0.4}-\eqref{0.10} there exists a smooth, strictly locally convex hypersurface $\Sigma$ which is a radial graph $X(x)=\rho(x)x$, with $\rho\leq\bar\rho$, satisfying equations \eqref{0.2}-\eqref{0.3}. Moreover, $c<\rho<\bar\rho$ in $\Omega$ and the principal curvatures $\kappa_i$ of $\Sigma$ satisfies $C^{-1}<\kappa_i<C$, where $c$, $C$ are both uniform positive constants.
  \end{mainthm}
  
  We can apply the Main Theorem to some more geometric settings.
  \begin{thm}\label{0.12}
  Let $\psi$ be a smooth positive function on $\mathbb{R}^{n+1}$, $f$ be the same as in Main Theorem. $\Gamma=\{\Gamma_1,\Gamma_2,...,\Gamma_m\}$ is a disjoint collection of smooth, closed, embedded, codimension $2$ submanifolds in $\mathbb{R}^{n+1}$. Suppose $\Gamma$ bounds a smooth convex hypersurface $\Sigma'$ satisfying $f(\kappa_{\Sigma'}[X])>\psi(X)$ for all $X\in\Sigma'$, and $\Sigma'$ lies strictly on one side of every tangent hyperplane of $\Sigma'$ at $\Gamma$. Then $\Gamma$ bounds a smooth convex hypersurface $\Sigma$ satisfying $f(\kappa_{\Sigma}[X])=\psi(X)$ for all $X\in\Sigma$.  
  \end{thm}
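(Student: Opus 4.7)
The plan is to reduce Theorem~\ref{0.12} to the Main Theorem by choosing an origin $O\in\mathbb{R}^{n+1}$ so that the given convex $\Sigma'$ becomes a strictly locally convex radial graph over a domain $\Omega\subset\mathbb{S}^n$ verifying (\ref{0.9}) and (\ref{0.10}). Once $O$ is properly placed, the Main Theorem applies almost verbatim to produce the sought hypersurface.

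\textbf{Choice of origin.} Let $K=\mathrm{conv}(\Sigma')$, so $\partial K=\Sigma'\cup C$, where $C$ denotes the union of the flat caps cut off by supporting hyperplanes at $\Gamma$. For each $p\in\Gamma$, let $H_p^+$ be the closed half-space bounded by $T_p\Sigma'$ and containing $\Sigma'$; the strict-side hypothesis gives $\Sigma'\setminus\Gamma\subset\mathrm{int}(H_p^+)$. By compactness of $\Gamma$, the open region $\mathcal{U}:=\bigl(\bigcap_{p\in\Gamma}\mathrm{int}(H_p^+)\bigr)\setminus K$ is non-empty: points just outside $K$, close to the relative interior of $C$, belong to $\mathcal{U}$. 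Select $O\in\mathcal{U}$ and translate coordinates so $O=0$.

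\textbf{Verification.} Since $O\notin K$, separation gives a half-space through $O$ that contains $K$, hence $\Sigma'$; by compactness of $\Sigma'$, the radial projection $\bar\Omega\subset\mathbb{S}^n$ sits in an open hemisphere, so no closed hemisphere lies inside $\bar\Omega$ and (\ref{0.9}) holds. Because $O$ lies on the $\Sigma'$-side of every tangent hyperplane to $\Sigma'$ (at interior points by convexity, at boundary points by the strict-side hypothesis), and because $O$ is separated from $\Sigma'$ by the caps $C$, each forward ray from $O$ enters $K$ through $C$ and exits through $\Sigma'$, so it meets $\Sigma'$ in at most one point. Consequently $\Sigma'$ admits a representation $\bar X(x)=\bar\rho(x)x$ with $\bar\rho\in C^\infty(\bar\Omega)$ and $\partial\Omega$ smooth, corresponding to $\Gamma$. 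Strict local convexity of $\Sigma'$ as a submanifold, combined with $O$ lying on its concave side, shows the resulting radial graph is strictly locally convex in the sense of the Main Theorem. Setting $\phi:=\bar\rho|_{\partial\Omega}$, the inequality $f(\kappa_{\Sigma'})>\psi$ becomes (\ref{0.10}).

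\textbf{Conclusion.} Apply the Main Theorem with data $(\Omega,f,\psi|_{\Lambda},\phi,\bar\rho)$ to obtain a smooth, strictly locally convex radial graph $X=\rho(x)x$ with $\rho\leq\bar\rho$, $\rho=\phi$ on $\partial\Omega$, and $f(\kappa_\Sigma[X])=\psi(X)$. Translating back recovers a smooth convex hypersurface $\Sigma\subset\mathbb{R}^{n+1}$ with $\partial\Sigma=\Gamma$ realizing the prescribed curvature.

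The decisive step is locating $O$. Without the strict-side hypothesis, $\mathcal{U}$ can be empty, and any origin chosen would either make $\bar\Omega$ contain a hemisphere, breaking (\ref{0.9}), or spoil the single-intersection radial-graph property. The strictness, together with the compactness of $\Gamma$, provides a non-empty open set of admissible origins, after which the reduction to the Main Theorem is routine.
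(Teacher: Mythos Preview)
Your reduction strategy is the right one, and your choice of origin is in the same spirit as the paper's, but the execution has two gaps.

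\textbf{Origin selection.} The paper proceeds differently: it invokes Ghomi's result that a strictly locally convex compact hypersurface-with-boundary satisfying your strict-side hypothesis extends to a closed ovaloid $O$, then takes the origin to be any point $A\in O\setminus\Sigma'$. This choice automatically guarantees that $A$ lies strictly on the concave side of $\Sigma'$ at \emph{every} point (not just at $\Gamma$), because $A$ sits on a strictly convex closed surface containing $\Sigma'$. Your set $\mathcal{U}=\bigl(\bigcap_{p\in\Gamma}\mathrm{int}(H_p^+)\bigr)\setminus K$ is too large: in the model case where $\Sigma'$ is the upper unit hemisphere, $\mathcal{U}$ contains points such as $(0,\dots,0,10)$, from which the inward normal at the north pole points \emph{away} from the origin, so the radial graph carries the wrong orientation and is not strictly locally convex in the sense required by the Main Theorem. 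Your assertion that ``at interior points by convexity'' $O$ lies on the $\Sigma'$-side of $T_q\Sigma'$ is exactly what fails. (Also, $\partial K\setminus\Sigma'$ need not consist of flat caps when $\Gamma$ is non-planar or has several components.) A correct direct argument would require $O\in\bigl(\bigcap_{q\in\Sigma'}\mathrm{int}(H_q^+)\bigr)\setminus K$ and a proof that this set is non-empty; the ovaloid extension supplies such a point for free.

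\textbf{Convexity of $\Sigma$.} This is the more serious omission. The Main Theorem produces only a \emph{strictly locally convex} radial graph $\Sigma$; the conclusion of the theorem you are proving is that $\Sigma$ is \emph{convex} (part of the boundary of a convex body). These are not the same, and you simply assert the stronger conclusion. The paper closes this gap by gluing $\Sigma$ to the complementary piece $O\setminus\Sigma'$ of the ovaloid to form a closed, locally convex (in the generalized sense) hypersurface $O'$, and then applying Van Heijenoort's theorem that a complete locally convex hypersurface which is absolutely convex at one point bounds a convex body. Without the ovaloid extension you have no obvious cap to glue on, so even after fixing the origin issue your approach still lacks a mechanism for upgrading local to global convexity.
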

  \begin{proof}
  Since $\Sigma'$ is convex and satisfies $f(\kappa_{\Sigma'}[X])>\psi(X)>0$, from condition \eqref{0.6} we see $\Sigma'$ is also strictly locally convex. From Theorem 1.2.5 in \cite{ghomiphdthesis}, we see that $\Sigma'$ can be extended to an ovaloid $O$. Choose a point $A\in O\setminus\Sigma'$ as the new origin. Then $\Sigma'$ is a radial graph over a smooth domain $\Omega$ in $\mathbb{S}^n$. Moreover, $\Omega$ is a proper subset of a hemisphere of $\mathbb{S}^n$. To see this, we just need to note that $\Sigma'$ stays strictly on one side of the tangent hyperplane of $O$ at $A$. Therefore, we can apply  Main Theorem to construct $\Sigma$.
  The hypersurface $\Sigma$ is convex as well as strictly locally convex. We can glue $O\setminus\Sigma'$ and $\Sigma$ together to get a closed hypersurface $O'$, though nonsmooth. $O'$ is locally convex in the sense that for every point in $O'$ there exists an open neighborhood staying on the boundary of a convex body. In \cite{heijenoort} Van Heijenoort proved in the generalized sense a complete locally convex hypersurface in $\mathbb{R}^{n+1}$ which is absolutely convex at least at one point is the boundary of an n+1-dimensional convex set. Here $O'$ clearly satisfies all the assumptions. Therefore $\Sigma$, which is part of $O'$, is a convex hypersurface.  
  \end{proof}

  Similarly, we can give a different version of Theorem \ref{0.12} which generalizes the results in \cite{radialgs} and \cite{clarkesmith}. 
  
  \begin{thm}\label{0.13}
  In $\mathbb{R}^{n+1}$ let $B$ be an n+1-dimensional compact convex body with a smooth boundary $\partial B$, $\psi$ and $f$ the same as in Theorem \ref{0.12}. Suppose $f(\kappa_{\partial B}[X])>\psi(X)$ for all $X\in\partial B$. Then for every smooth subdomain $D$ of $\partial B$ with nontrivial boundary $\Gamma$, $\Gamma$ bounds a strictly locally convex hypersurface $\Sigma$ inside $B$ which satisfies $f(\kappa_\Sigma[X])=\psi(X)$ for all $X\in\Sigma$. Moreover, the set bounded inside $\Sigma\cup(\partial B\setminus D)$ is also a convex body.
  \end{thm}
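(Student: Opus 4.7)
The strategy is to reduce Theorem~\ref{0.13} to the Main Theorem by using $D$ itself as the subsolution $\Sigma'$, then invoking Van Heijenoort's theorem in the same spirit as in the proof of Theorem~\ref{0.12}. The positivity of $f(\kappa_{\partial B})$ together with \eqref{0.6} ensures that $\partial B$ is strictly locally convex, and hence (as a closed smooth hypersurface bounding $B$) strictly convex. Since $\Gamma$ is nontrivial, $\partial B\setminus\bar D$ has nonempty interior in $\partial B$; I pick a point $A$ in this interior and translate so that $A$ is the origin. Strict convexity of $B$ implies that $\partial B\setminus\{A\}$ lies strictly on the interior side of $T_A\partial B$, so every open ray from $A$ into the half-space containing $\mathrm{int}(B)$ meets $\partial B$ at exactly one further point. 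Thus $D$ is realized as a radial graph $\bar X(x)=\bar\rho(x)\,x$ over a smooth domain $\Omega\subset\mathbb{S}^n$ contained in an open hemisphere, which gives hypothesis \eqref{0.9}.

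Next I would verify the remaining hypotheses of the Main Theorem. Defining $\phi$ on $\partial\Omega$ so that $\Gamma=\{\phi(x)x:x\in\partial\Omega\}$ yields $\bar\rho=\phi$ on $\partial\Omega$, while $f(\kappa_{\bar X})=f(\kappa_{\partial B})>\psi$ on $D$ holds by assumption, so $\bar X$ is a strict subsolution and \eqref{0.10} holds. Applying the Main Theorem then produces a smooth, strictly locally convex radial graph $\Sigma=\{\rho(x)\,x:x\in\bar\Omega\}$ with $\rho\leq\bar\rho$, $f(\kappa_\Sigma)=\psi$, and $\partial\Sigma=\Gamma$. Because $B$ is convex and both $A$ and $D$ lie in $\bar B$, every segment from $A$ to a point of $D$ is contained in $B$; the bound $\rho\leq\bar\rho$ then forces $\Sigma\subset B$.

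It remains to show that the closed hypersurface $O':=\Sigma\cup(\partial B\setminus D)$ bounds a convex body. Away from $\Gamma$, $O'$ is smooth and strictly locally convex, so the only delicate point is local convexity at the crease along $\Gamma$. At a point $p\in\Gamma$, the tangent hyperplane $T_p\partial B$ supports $B$ from one side; since both $\partial B\setminus D$ and $\Sigma$ lie in $B$ near $p$, this same hyperplane locally supports $O'$, so $O'$ is locally the boundary of a convex set at $p$. Combined with absolute convexity at any smooth point of $\partial B\setminus D$, Van Heijenoort's theorem~\cite{heijenoort} yields the desired convex body $\tilde B$ with $\partial\tilde B=O'$. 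I expect the main subtlety to lie precisely in this crease analysis: one must verify that $\Sigma$ approaches $\partial B$ from the interior of $B$ so that $T_p\partial B$ genuinely supports $O'$ rather than separating its two pieces, and it is the Main Theorem's inequality $\rho\leq\bar\rho$, together with convexity of $B$, that makes this work.
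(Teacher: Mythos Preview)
Your proposal is correct and follows essentially the same approach as the paper, which simply states that the proof of Theorem~\ref{0.13} is identical to that of Theorem~\ref{0.12}: here $\partial B$ plays the role of the ovaloid $O$ (so Ghomi's extension step is unnecessary), one picks $A\in\partial B\setminus\bar D$ as the new origin to realize $D$ as a radial graph over a domain in an open hemisphere, applies the Main Theorem, and then invokes Van Heijenoort. Your added details---the verification that $\Sigma\subset B$ via $\rho\leq\bar\rho$ and convexity of $B$, and the crease analysis at $\Gamma$ using the supporting hyperplane $T_p\partial B$---fill in points the paper leaves implicit.
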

  
  The proof of Theorem $\ref{0.13}$ is the same as Theorem $\ref{0.12}$.\\
  
  The proof of the Main Theorem is organized as follows. In Section \ref{1.18} we introduce some preliminary results and define two elliptic operators to express \eqref{0.2}. One of these is to be used in Section \ref{2.93} to derive a priori $C^2$ estimates for the solution. We use this in Section \ref{3.1} and work with the other operator, applying the continuity method and degree theory to prove the existence of solution. \\
  
  The author would like to thank his advisor Professor Joel Spruck for introducing this problem and giving helpful suggestions.  
   
  \hspace{3cm}
  \section{Preliminaries and defining the equations}\label{1.18}
    Consider the radial graph $\Sigma$: $X(x)=\rho(x)x$, $x\in\Omega\subset\mathbb{S}^n$. We set $u=\frac{1}{\rho}$. Let $e_1,...,e_n$ be a smooth orthonormal frame field on $\Omega$. $\{e_i\}$ will be pushed forward to a frame field $\{\tau_i\}$ on $\Sigma$, where $\tau_i=-\frac{\nabla_iu}{u^2}x+\frac{1}{u}e_i$. Here $\nabla$ is the Riemannian connection on $\mathbb{S}^n$ and $\tilde\nabla$ is the connection in $\mathbb{R}^{n+1}$. We adopt the notation $\nabla_iu$, $\nabla_{ij}u=\nabla_i\nabla_ju$, $\nabla_{ijk}u=\nabla_i\nabla_j\nabla_ku$ etc for the covariant derivatives of a function. The metric of the radial graph $\Sigma$ can be given in terms of $u$ by
  \begin{equation}\label{1.1}
  g_{ij}=\langle\tau_i,\tau_j\rangle=\frac{1}{u^2}\delta_{ij}+\frac{1}{u^4}\nabla_iu\nabla_ju,
  \end{equation}
  where $\langle\cdot,\cdot\rangle$ denotes the standard inner product in $\mathbb{R}^{n+1}$. The inward unit normal to $\Sigma$ is
  \begin{equation}\label{1.2}
  \nu=\frac{-\nabla u-ux}{w},
  \end{equation}
  where $\nabla u=\text{grad}u$, $w=\sqrt{u^2+|\nabla u|^2}$. The second fundamental form of $\Sigma$ is
  \begin{equation}\label{1.31}
  h_{ij}=\frac{1}{uw}(u\delta_{ij}+\nabla_{ij}u).
  \end{equation}
  Therefore $\Sigma$ is strictly locally convex if and only if the matrix
  \begin{equation}\label{1.12}
  [u\delta_{ij}+\nabla_{ij}u] \text{ is positive definite at any point.}
  \end{equation}
  Later for convenience we may just say a function is strictly locally convex without specifying what we really mean is that the corresponding radial graph is strictly locally convex, e.g., when we say $u$ is strictly locally convex, that means \eqref{1.12} holds. The principal curvatures of $\Sigma$ are the eigenvalues of $h_{ik}g^{kj}$, which is similar to the symmetric matrix $[a_{ij}]:=[g^{ij}]^\frac{1}{2}[h_{ij}][g^{ij}]^\frac{1}{2}$. $[g^{ij}]^\frac{1}{2}$ can be written as $u[\gamma^{ij}]$, where $[\gamma^{ij}]$ and its inverse matrix $[\gamma_{ij}]$ are given by
  \begin{equation}\label{1.4}
  \gamma^{ij}=\delta_{ij}-\frac{\nabla_{i}u\nabla_{j}u}{w(u+w)},\quad \gamma_{ij}=\delta_{ij}+\frac{\nabla_{i}u\nabla_{j}u}{u(u+w)}.
  \end{equation}
  Therefore
  \begin{equation}\label{1.3}
  a_{ij}=\frac{u}{w}\gamma^{ik}(u\delta_{kl}+\nabla_{kl}u)\gamma^{lj},
  \end{equation}
  and its eigenvalues are the principal curvatures of $\Sigma$.  

  Now we do a reformulation of equation \eqref{0.2} in the form
  \begin{equation}\label{1.5}
  G(\nabla^2u,\nabla u,u)=\psi(X(x)).
  \end{equation}
  Let $\mathcal{S}$ be the set of $n\times n$ symmetric matrices and $\mathcal{S}^+=\{A\in\mathcal{S}:A>0\}$, i.e., the set of positive definite symmetric matrices. With the function $F$ defined by
  \begin{equation}\label{1.6}
  F(A)=f(\lambda(A)),\quad A\in\mathcal{S}^+,
  \end{equation}
  where $\lambda(A)$ denotes the eigenvalues of $A$, equation \eqref{0.2} thus can be written in the form
  \begin{equation}\label{1.7}
  F([a_{ij}])=\psi(X(x)).
  \end{equation}
  Therefore, the function $G$ in \eqref{1.5} is defined by
  \begin{equation}\label{1.8}
  G(\nabla^2u,\nabla u,u)=F([a_{ij}]).
  \end{equation}
  Then equations \eqref{0.2}-\eqref{0.3} can be rewritten as
  \begin{equation}\label{1.14}
  \begin{aligned}
  G(\nabla^2u,\nabla u,u)&=\psi(X(x))\quad\text{in}\:\Omega,\\
  u&=\varphi\quad\text{on}\:\partial\Omega,
  \end{aligned}
  \end{equation}
  where $\varphi=\frac{1}{\phi}$.

  We next recall some properties of $F$ and $G$ (see \cite{CNS3}, \cite{locallyconvexgs}). We will use the notation
  \begin{equation}\label{1.9}
  F^{ij}(A)=\frac{\partial F}{\partial a_{ij}}(A).
  \end{equation}
  The matrix $[F^{ij}(A)]$ is symmetric and has eigenvalues $f_1,...,f_n$. By assumption \eqref{0.4}, $[F^{ij}(A)]$ is therefore positive definite for $A\in\mathcal{S}^+$, while \eqref{0.5} implies that $F$ is a concave function in $\mathcal{S}^+$. $[F^{ij}(A)]$ and $A$ can be orthogonally diagonalized simultaneously. Consequently, we have
  \begin{equation}\label{1.10}
  F^{ij}(A)a_{ij}=\sum f_i\lambda_i.
  \end{equation}
  For equation \eqref{1.14}, we have
  \begin{equation}\label{1.11}
  G^{ij}=\frac{\partial G}{\partial\nabla_{ij}u}=\frac{u}{w}F^{kl}\gamma^{ik}\gamma^{lj}.
  \end{equation}
  So equation \eqref{1.14} is elliptic if \eqref{1.12} holds. The concavity of $F$ implies that $G$ is concave with respect to $\nabla_{ij}u$. From the assumption \eqref{0.10}, the function $\underline{u}=\frac{1}{\bar{\rho}}$ is a subsolution of equation \eqref{1.14}, i.e.,
  \begin{equation}\label{1.13}
  \begin{aligned}
  G(\nabla^{2}\underline{u},\nabla \underline{u}, \underline{u})&=\underline{\psi}(x)>\psi(\bar X(x))\quad \text{in}\:\bar\Omega,\\
  \underline{u}&=\varphi\quad\text{on}\:\partial\Omega.
  \end{aligned}
  \end{equation}

  Here we chose $u=\frac{1}{\rho}$ to set up equation \eqref{1.14} because it turns out the operator $G$ works very well for deriving a priori estimates. However, when it comes to applying the continuity method and degree theory to prove the existence of the solution, we find \eqref{1.14} is not the right equation to work with. The trouble mainly comes from, as our computation will show later, the fact that $G_u$ is positive and can not be bounded easily. So here it is necessary for us to express \eqref{0.2}-\eqref{0.3} in a different form. We set $v=-\ln\rho=\ln u$. Then $[a_{ij}]$ can be written in terms of $v$, that is,
  \begin{equation}\label{1.19}
  a_{ij}=\frac{e^v}{w}(\delta_{ij}+\gamma^{ik}\nabla_{kl}v\gamma^{lj}),
  \end{equation}
  where
  \begin{equation}\label{1.20}
  w=\sqrt{1+|\nabla v|^2},\quad \gamma^{ij}=\delta_{ij}-\frac{\nabla_iv\nabla_jv}{w(1+w)}.
  \end{equation}
  Then equation \eqref{1.14} becomes
  \begin{equation}\label{1.21}
  \begin{aligned}
  H(\nabla^2v,\nabla v,v)=F([a_{ij}])&=\psi(X(x))\quad\text{in}\:\Omega,\\
  v&=\ln\varphi\quad\text{on}\:\partial\Omega.
  \end{aligned}
  \end{equation}
  Here we shall note equations \eqref{1.14} and \eqref{1.21} will appear in different sections. So the ambiguous notations $w$ and $\gamma^{ij}$ shall not cause any confusion. Correspondingly $\underline{v}=\ln\underline{u}$ is the subsolution with respect to \eqref{1.21}. We call $v$ strictly locally convex if
  \begin{equation}
  [\delta_{ij}+\nabla_iv\nabla_jv+\nabla_{ij}v]>0\quad\text{in}\:\bar\Omega.
  \end{equation}
  $H$ is elliptic for strictly locally convex functions $v$ and is concave with respect to $\nabla_{ij}v$.
  \hspace{3cm}
  \section{A priori estimates}\label{2.93}
  In this section we derive the a priori $C^2$ estimates for locally strictly convex solutions $u$ of equation \eqref{1.14} with $u\geq\underline{u}$. The $C^1$ bound follows from the convexity of the radial graph, which is established in \cite{radialgs}. We recall the results here.
  \begin{thm}\label{1.22}
  Let $u\geq\underline{u}$ be a strictly locally convex function with $u=\underline{u}$ on $\partial\Omega$. Then we have the estimates
  \begin{equation}\label{1.23}
  K^{-1}\leq u\leq K,\quad |\nabla u|\leq C\quad\text{in}\:\bar\Omega,
  \end{equation}
  where $K$ depends on $\Omega$, $\sup_{\partial\Omega}\underline{u}$, $\inf_\Omega\underline{u}$ and $C$ depends in addition on $\|\underline{u}\|_{C^2(\bar\Omega)}$.
  \end{thm}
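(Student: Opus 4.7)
My plan is to establish the three estimates in order: the pointwise lower bound for $u$, the pointwise upper bound, and finally the gradient bound. The lower bound is immediate: since $u\geq\underline{u}$ on the compact set $\bar\Omega$, we have $u\geq\inf_{\bar\Omega}\underline{u}>0$.

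For the upper bound $u\leq K$ (equivalently, a positive lower bound on $\rho$), the key is that the strict local convexity condition $[u\delta_{ij}+\nabla_{ij}u]>0$ means the radial graph $\Sigma$ is strictly locally convex in $\mathbb{R}^{n+1}$ and thus lies on one side of each of its tangent hyperplanes. Hypothesis \eqref{0.9} supplies a unit vector $\xi\in\mathbb{S}^n$ and $c_0>0$ with $\langle x,\xi\rangle\geq c_0$ on $\bar\Omega$, so $\Sigma$ lies inside a cone strictly smaller than a half-space. A supporting-hyperplane argument then combines this cone property with the prescribed bounded boundary $\partial\Sigma=\{\phi(x)x:x\in\partial\Omega\}$ to conclude $\rho\geq c>0$ uniformly, with $c$ depending only on $\Omega$, $\sup_{\partial\Omega}\underline{u}$, and $\inf_{\bar\Omega}\underline{u}$.

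The gradient bound is established first on $\partial\Omega$ and then propagated inward. Tangentially, since $u=\underline{u}$ on $\partial\Omega$, the tangential derivatives of $u$ and $\underline{u}$ coincide there. For the outward normal derivative, $u\geq\underline{u}$ gives $\partial_\nu u\leq\partial_\nu\underline{u}$, while the strict local convexity of $u$ at the boundary, combined with a supporting tangent hyperplane to $\Sigma$ at $X_0=\rho(x_0)x_0$ for $x_0\in\partial\Omega$, yields a pointwise lower bound for $\rho$ near $x_0$, hence an upper bound on $u$, which since $u(x_0)=\underline{u}(x_0)$ controls $\partial_\nu u$ from below in terms of $\|\underline{u}\|_{C^2(\bar\Omega)}$. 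To pass from $\partial\Omega$ to the interior, my plan is to exploit convexity directly: since $\Sigma$ is strictly locally convex with $\rho$ already bounded above and below, the angle between the tangent plane of $\Sigma$ and the radial direction is uniformly controlled, which translates into a uniform bound on $w=\sqrt{u^2+|\nabla u|^2}$ and hence on $|\nabla u|$. Alternatively, one can apply the maximum principle to an auxiliary function of the form $\Phi=\log w+g(u)$ and use the positive definiteness of $[u\delta_{ij}+\nabla_{ij}u]$ to rule out interior maxima.

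The main obstacle is the upper $C^0$ bound: this is the step that genuinely uses the global hypothesis \eqref{0.9}, and it requires translating the local PDE condition of strict convexity into a global convex-body statement. Without \eqref{0.9} the radial graph could approach the origin arbitrarily closely and no such uniform bound could hold. By comparison, the $C^1$ bound reduces, once the $C^0$ bounds are in hand, to a classical convex-hypersurface argument combined with a barrier constructed from $\underline{u}$ near the boundary.
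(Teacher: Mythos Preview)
The paper does not supply its own proof of this theorem; it simply quotes the result from Guan--Spruck \cite{radialgs}, noting only that condition \eqref{0.9} is used solely for the upper bound on $u$. Your three-step outline matches the structure of that cited argument, and the lower bound $u\geq\inf_{\bar\Omega}\underline u$ together with the boundary comparison of $\nabla(u-\underline u)$ are fine.

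There is, however, a genuine gap in your upper bound for $u$: you have misread hypothesis \eqref{0.9}. It says that $\Omega$ \emph{contains} no hemisphere; it does \emph{not} say that $\bar\Omega$ is contained in one. The thin equatorial band $\Omega=\{x\in\mathbb S^n:|x_{n+1}|<\varepsilon\}$ satisfies \eqref{0.9} (every hemisphere contains one of the poles, which lies outside $\Omega$), yet for any unit vector $\xi$ there exist $x,-x\in\bar\Omega$, so no inequality $\langle x,\xi\rangle\geq c_0>0$ can hold on all of $\bar\Omega$. Your cone argument therefore does not apply. In the Guan--Spruck proof \eqref{0.9} is used differently and pointwise: one first shows, from the positivity of $[u\delta_{ij}+\nabla_{ij}u]$ and the radial-graph structure, that $\Sigma$ lies \emph{globally} on the inner side of each of its tangent hyperplanes. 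If $\rho$ has an interior minimum at $x_0$, the tangent hyperplane there is $\{\langle Y,x_0\rangle=\rho(x_0)\}$, so $\rho(x)\langle x,x_0\rangle\leq\rho(x_0)$ for every $x\in\bar\Omega$; condition \eqref{0.9} then guarantees that the hemisphere about $x_0$ meets $\partial\Omega$, and a compactness argument over $\bar\Omega$ yields $y\in\partial\Omega$ with $\langle y,x_0\rangle$ bounded below by a constant depending only on $\Omega$, which combined with $\rho(y)=\phi(y)\geq(\sup_{\partial\Omega}\underline u)^{-1}$ forces $\rho(x_0)$ to be uniformly positive. Your interior gradient step is likewise only a heuristic: the claim that ``the angle between the tangent plane and the radial direction is uniformly controlled'' is exactly the conclusion to be proved, and in \cite{radialgs} it again rests on the global one-sided support lemma together with the fixed boundary data, not on local convexity and the $C^0$ bounds alone.
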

   We remark here that getting the upper bound of $u$ is the only place we need condition \eqref{0.9}. From now on we fix $K$ as above. Later when we use $K$, it always means the same constant. We define $\Lambda_K:=\{X\in\Lambda|K^{-1}\leq\|X\|\leq K\}$.

  Therefore we devote the rest of this section to deriving bounds for $\nabla^2u$. In the rest of this section, $u$ will be a smooth strictly locally convex solution of \eqref{1.14} with $u\geq\underline{u}$. We shall remark here later for the proof of existence, we will work on auxiliary forms of equation \eqref{1.14}, i.e., we will change $\psi$ to some other functions. So in this section the reader shall think $\psi$ as a general positive smooth function on $\Lambda$, not just only the prescribed curvature function in \eqref{0.2}.
  
  \subsection{\textit{Bound for $|\nabla^2u|$ on $\partial\Omega$}.} Given a point $x_0\in\partial\Omega$, let $e_1,...,e_n$ be a local orthonormal frame field on $\mathbb{S}^n$ around $x_0$, obtained by parallel translation of a local orthonormal frame field on $\partial\Omega$ and the interior, unit, normal vector field to $\partial\Omega$, along the geodesic perpendicular to $\partial\Omega$ on $\mathbb{S}^n$. $e_n$ is the parallel translation of the unit normal field on $\partial\Omega$.

  On $\partial\Omega$ we have $u-\underline{u}=0$ so that
  \begin{equation}\label{2.5}
  \nabla_{\alpha}(u-\underline{u})=0,\quad\nabla_{\alpha}(\nabla_{\beta}(u-\underline{u}))=0\quad \text{for}\:\alpha,\beta<n,
  \end{equation}
  and hence
  \begin{equation}\label{2.6}
  \begin{aligned}
  \nabla_{\alpha\beta}(u-\underline{u})&=\nabla_{\alpha}(\nabla_{\beta}(u-\underline{u}))-\sum_i\langle\nabla_\alpha e_\beta,e_i\rangle\nabla_i(u-\underline{u})\\
  &=-B_{\alpha\beta}\nabla_n(u-\underline{u})\quad \text{for}\:\alpha,\beta<n,
  \end{aligned}
  \end{equation}
  where $B_{\alpha\beta}=\langle\nabla_\alpha e_\beta,e_n\rangle$ is the second fundamental form of $\partial\Omega$.
  It follows that
  \begin{equation}\label{2.7}
  |\nabla_{\alpha\beta}u(x_0)|\leq C,\quad \alpha,\beta<n,
  \end{equation}
  where $C$ depends on $\Omega$, $\inf_\Omega\underline{u}$ and $\|\underline{u}\|_{C^2(\bar\Omega)}$.

  We now proceed to estimate the mixed, normal, tangential derivatives $\nabla_{n\alpha}u(x_0),\:\alpha<n$. We need some properties of the linearized operator
  \begin{equation}
  L=G^{ij}\nabla_{ij}+G^s\nabla_s,
  \end{equation}
  where $G^s\equiv\frac{\partial G}{\partial\nabla_su}$. We also denote $\frac{\partial G}{\partial u}$ by $G_u$.

  \begin{lem}
  For some constant $C>0$ depending on $\Omega$, $\inf_\Omega\underline{u}$, $\|\underline{u}\|_{C^2(\bar\Omega)}$ and $\sup_{\Lambda_K}\psi$, we have
  \begin{equation}\label{2.8}
  \sum|G^s|\leq C,
  \end{equation}
  \begin{equation}\label{2.9}
  |G_u|\leq C(1+\sum G^{ii}).
  \end{equation}
  \end{lem}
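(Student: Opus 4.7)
The plan is to apply the chain rule, writing $G^s = F^{ij}\,\partial a_{ij}/\partial(\nabla_s u)$ and $G_u = F^{ij}\,\partial a_{ij}/\partial u$, and then to bound the resulting $F$-traces using concavity of $f$ together with one structural identity that packages a single term exactly as $\sum G^{ii}$.

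First I would rewrite \eqref{1.3} as $[a_{ij}]=(u/w)\gamma M\gamma$ with $M = uI+\nabla^2 u$, so that the relation $\gamma M = (w/u)\,a\gamma^{-1}$ and its transpose absorb $M$ back into $a$ after differentiation. Treating $\nabla^2 u$ and $u$ as independent variables, differentiating in $\nabla_s u$ yields
\begin{equation*}
\partial_s a \;=\; \frac{\partial_s(u/w)}{u/w}\,a \;+\; \partial_s\gamma\cdot\gamma^{-1}a \;+\; a\gamma^{-1}\partial_s\gamma,
\end{equation*}
while differentiating in $u$ produces the analogous formula together with one extra term $(u/w)\gamma^2$ coming from $\partial_u M = I$. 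All the coefficients $\partial_s(u/w),\partial_u(u/w),\gamma,\gamma^{-1},\partial_s\gamma,\partial_u\gamma$ are smooth functions of $(u,\nabla u)$, hence uniformly bounded by \eqref{1.2}, \eqref{1.4} and Theorem \ref{1.22}.

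The key analytic input is the concavity inequality $F^{ij}a_{ij} = \sum f_i\lambda_i \le f(\lambda) = \psi$, which I would obtain by applying concavity of $g(t)=f(t\lambda)$ between $t=0$ and $t=1$ and using $f=0$ on $\partial\Gamma_n^+$ from \eqref{0.6}. Because $[F^{ij}]$ and $[a_{ij}]$ can be simultaneously orthogonally diagonalized, any trace of the form $\mathrm{tr}(FaB)$ with $B$ uniformly bounded is controlled by $\max_i|B_{ii}|\cdot\sum f_i\lambda_i\le C\sup_{\Lambda_K}\psi$. Applying this to each of the three terms in $F^{ij}\partial_s a_{ij}$ (after a cyclic permutation to put each one into $\mathrm{tr}(FaB)$ form) immediately gives $\sum_s|G^s|\le C$.

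For $G_u$ the same reasoning controls every term except $(u/w)F^{ij}(\gamma^2)_{ij}$, which by \eqref{1.11} is identically $\sum G^{ii}$, yielding $|G_u|\le C(1+\sum G^{ii})$. The hardest part, and the reason for organizing the computation around the identity $(u/w)\gamma M\gamma = a$, is recognizing that the $\partial_u M = I$ contribution lands on $\sum G^{ii}$ on the nose; treating it as a generic $F^{ij}$-trace would only produce a bound of order $\sum f_i$, losing the factor $u/w$ that is essential for the boundary $C^2$ estimates to come.
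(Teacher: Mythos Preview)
Your proposal is correct and follows essentially the same approach as the paper: both apply the chain rule to $F([a_{ij}])$, use the relation $\gamma M = (w/u)a\gamma^{-1}$ to trade $M=uI+\nabla^2 u$ for $a$, bound every resulting term of the form $\mathrm{tr}(FaB)$ by $\sum f_i\kappa_i\le f(\kappa)=\psi$ via concavity and \eqref{0.6}, and identify the lone $\partial_u M=I$ contribution $(u/w)F^{ij}(\gamma^2)_{ij}$ with $\sum G^{ii}$ through \eqref{1.11}. The paper carries this out in coordinates (arriving at the explicit formulas \eqref{2.15} and \eqref{2.19}) while you package it as matrix traces, but the content is identical.
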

  \begin{proof}
  \eqref{2.8} and \eqref{2.9} follow from straightforward computation. Since $G(\nabla^2u,\nabla u,u)=F([\frac{u}{w}\gamma^{ik}(u\delta_{kl}+\nabla_{kl}u)\gamma^{lj}])$,
  \begin{equation}\label{2.11}
  \begin{aligned}
  G^s&=F^{ij}u(u\delta_{kl}+\nabla_{kl}u)\frac{\partial}{\partial\nabla_su}\left(\frac{1}{w}\gamma^{ik}\gamma^{lj}\right)\\
  &=-\frac{\nabla_su}{w^2}F^{ij}a_{ij}+2F^{ij}\frac{u}{w}(u\delta_{kl}+\nabla_{kl}u)\gamma^{ik}\frac{\partial\gamma^{lj}}{\partial\nabla_su}.
  \end{aligned}
  \end{equation}
  Since $a_{ij}=\frac{u}{w}\gamma^{ik}(u\delta_{kl}+\nabla_{kl}u)\gamma^{lj}$,
  \begin{equation}\label{2.12}
  \frac{u}{w}\gamma^{ik}(u\delta_{kl}+\nabla_{kl}u)=a_{ik}\gamma_{kl}.
  \end{equation}
  We also compute
  \begin{equation}\label{2.13}
  \frac{\partial\gamma^{lj}}{\partial\nabla_su}=\frac{u\nabla_lu\nabla_ju\nabla_su}{w^3(u+w)^2}-\frac{\nabla_ju\gamma^{ls}}{w(u+w)}-\frac{\nabla_lu\gamma^{js}}{w(u+w)}.
  \end{equation}
  Then
  \begin{equation}\label{2.14}
  \gamma_{kl}\frac{\partial\gamma^{lj}}{\partial\nabla_su}=-\frac{\nabla_ju\gamma^{ks}}{w(u+w)}-\frac{\nabla_ku\gamma^{js}}{u(u+w)}
  \end{equation}
  since $\gamma_{kl}\nabla_lu=\frac{w}{u}\nabla_ku$, $\gamma_{kl}\gamma^{ls}=\delta_{ks}$.
  From \eqref{2.11}, \eqref{2.12}, \eqref{2.14}, and we also notice $[F^{ij}][a_{ij}]=[a_{ij}][F^{ij}]$ since $[F^{ij}]$ and $[a_{ij}]$ can be diagonalized simultaneously, we finally get
  \begin{equation}\label{2.15}
  G^s=-\frac{\nabla_su}{w^2}F^{ij}a_{ij}-2F^{ij}a_{ik}\frac{\nabla_ju\gamma^{ks}}{uw}.
  \end{equation}
  From the established $C^1$ bound \eqref{1.23} and \eqref{1.10}, conditions \eqref{0.4}-\eqref{0.6} and the fact that $[F^{ij}]$ and $[a_{ij}]$ can be diagonalized simultaneously, we get
  \begin{equation}\label{2.20}
  \sum|G^s|\leq C\sum f_i\kappa_i\leq Cf(\kappa_1,...,\kappa_n)=C\psi\leq C.
  \end{equation}
  \eqref{2.8} is established.

  For $G_u$, we do same kind of computation,
  \begin{equation}\label{2.16}
  \begin{aligned}
  G_u&=F^{ij}\Bigg(\frac{1}{w}\gamma^{ik}(u\delta_{kl}+\nabla_{kl}u)\gamma^{lj}-\frac{u}{w^2}\gamma^{ik}(u\delta_{kl}+\nabla_{kl}u)\gamma^{lj}\frac{\partial w}{\partial u}\\
  &\quad\quad\quad\:+\frac{u}{w}(u\delta_{kl}+\nabla_{kl}u)\frac{\partial(\gamma^{ik}\gamma^{lj})}{\partial u}+\frac{u}{w}\gamma^{ik}\gamma^{lj}\delta_{kl}\Bigg)\\
  &=F^{ij}\left(\frac{1}{u}a_{ij}-\frac{u}{w^2}a_{ij}+2a_{ik}\gamma_{kl}\frac{\partial\gamma^{lj}}{\partial u}+\frac{u}{w}\gamma^{ik}\gamma^{kj}\right),
  \end{aligned}
  \end{equation}
  and
  \begin{equation}\label{2.17}
  \frac{\partial\gamma^{lj}}{\partial u}=\frac{\nabla_lu\nabla_ju}{w^3},
  \end{equation}
  \begin{equation}\label{2.18}
  \gamma^{ik}\gamma^{kj}=\delta_{ij}-\frac{\nabla_iu\nabla_ju}{w^2}.
  \end{equation}
  Combining \eqref{2.16}-\eqref{2.18}, we get
  \begin{equation}\label{2.19}
  G_u=\left(\frac{1}{u}-\frac{u}{w^2}\right)F^{ij}a_{ij}+2F^{ij}a_{ik}\frac{\nabla_ju\nabla_ku}{uw^2}+\frac{u}{w}F^{ij}\left(\delta_{ij}-\frac{\nabla_iu\nabla_ju}{w^2}\right).
  \end{equation}
  Similarly as \eqref{2.8} and also by \eqref{1.11} and the fact that $[F^{ij}]$ and $[G^{ij}]$ are positive definite, we get \eqref{2.9}.
  \end{proof}

  \begin{lem}\label{2.34}
  For some positive constants $t$ and $\delta$ sufficiently small and $N$ sufficiently large depending on $\Omega$, $\inf_\Omega\underline{u}$, $\|\underline{u}\|_{C^2(\bar\Omega)}$, $\sup_{\Lambda_K}\psi$ and convexity of $\underline{u}$, the function $v=u-\underline{u}+td-Nd^2$ satisfies
  \begin{equation}\label{2.21}
  \begin{cases}
  Lv\leq-1-\beta\sum{G^{ii}}&\text{in }\Omega\cap B_\delta\\
  v\geq0&\text{on }\partial(\Omega\cap B_\delta),
  \end{cases}
  \end{equation}
  where $\beta>0$ depends only on the convexity of $\underline{u}$, $d$ is the distance function to $\partial\Omega$ and $B_\delta$ is a ball of radius $\delta$ centered at a point on $\partial\Omega$.
  \end{lem}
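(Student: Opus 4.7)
The function $v = u - \underline u + td - Nd^2$ is a standard Guan--Spruck barrier; the lemma splits into a routine boundary check and the interior pointwise inequality.

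\emph{Boundary of $\Omega\cap B_\delta$.} On $\partial\Omega \cap \bar B_\delta$ both $u - \underline u$ and $d$ vanish, so $v = 0$. On $\bar\Omega \cap \partial B_\delta$ one has $u - \underline u \geq 0$ by hypothesis, and enforcing the final constraint $\delta \leq t/N$ forces $td - Nd^2 = d(t - Nd) \geq 0$; hence $v \geq 0$.

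\emph{Interior inequality.} Shrink $\delta$ below the injectivity radius of $\partial\Omega$ so $d$ is smooth in $B_\delta$, and decompose $Lv = L(u - \underline u) + tLd - NL(d^2)$. I bound $L(u - \underline u)$ in two complementary ways. First, concavity of $G$ in $\nabla^2u$, followed by a mean-value expansion in the $(\nabla u, u)$ slots whose intermediate coefficients $\tilde G^s, \tilde G_u$ are uniformly bounded (because $\nabla^2\underline u$ puts the relevant curvature matrix in a fixed compact subset of $\Gamma_n^+$, so the arguments behind \eqref{2.8}--\eqref{2.9} apply at subsolution data), combined with the uniform strict subsolution gap $\underline\psi - \psi(\bar X) \geq 2\epsilon_0 > 0$ on $\bar\Omega$ and $|\psi(X) - \psi(\bar X)| = O(|u - \underline u|)$, yields
\[
  L(u - \underline u) \leq -2\epsilon_0 + C\bigl(|u - \underline u| + |\nabla(u - \underline u)|\bigr).
\]
Second, strict local convexity of $\underline u$ provides a uniform lower bound $[\underline u\delta_{ij} + \nabla_{ij}\underline u] \geq 4\beta I$, and since $u \geq \underline u$ also $[u\delta_{ij} + \nabla_{ij}\underline u] \geq 4\beta I$; the trace inequality applied to the positive semidefinite matrix $[G^{ij}]$ gives $G^{ij}(u\delta_{ij} + \nabla_{ij}\underline u) \geq 4\beta\sum G^{ii}$. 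Pairing with the identity $F^{ij}a_{ij} = \sum f_i\lambda_i \leq \psi \leq C$ (the upper bound from concavity of $f$ with $f \equiv 0$ on $\partial\Gamma_n^+$) yields
\[
  G^{ij}\nabla_{ij}(u - \underline u) \leq C - 4\beta\sum G^{ii}.
\]
A convex combination with small weight on the second bound then produces $L(u - \underline u) \leq -\epsilon_0 - \beta'\sum G^{ii} + C(|u-\underline u| + |\nabla(u-\underline u)|)$ for some $\beta' > 0$ proportional to the convexity constant $\beta$ of $\underline u$.

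\emph{Distance terms, parameter choice, and main obstacle.} Combine the above with $tLd \leq Ct(1 + \sum G^{ii})$ and $-NL(d^2) \leq -2NG^{ij}\nabla_id\nabla_jd + CNd(1 + \sum G^{ii})$. Choose parameters in order: $t$ small so $Ct \leq \beta'/4$; then $\delta \leq t/N$ small enough that $C|u-\underline u|$ and $CNd\sum G^{ii}$ are controlled by $\epsilon_0/2$ and $\beta'/4$ respectively; finally $N$ large so the nonnegative ellipticity term $-2NG^{ij}\nabla_id\nabla_jd$ dominates the lone remaining $O(1)$ contribution of $C|\nabla(u-\underline u)|$. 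The main obstacle is this final absorption: the tangential components of $\nabla(u - \underline u)$ vanish on $\partial\Omega$, but the normal component in general does not, so $|\nabla(u - \underline u)|$ stays $O(1)$ as $d \to 0$; the absorption therefore relies on a structural positive lower bound for $G^{ij}\nabla_id\nabla_jd$ extracted from the explicit form $G^{ij} = \tfrac{u}{w}F^{kl}\gamma^{ik}\gamma^{lj}$, the ellipticity of $G$, and the $C^1$ bounds of Theorem~\ref{1.22}, rather than from any a priori control on the smallest eigenvalue of $[G^{ij}]$.
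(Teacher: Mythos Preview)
Your approach has a genuine gap at the final absorption step. You claim a ``structural positive lower bound for $G^{ij}\nabla_id\nabla_jd$'', but no such bound is available without $C^2$ control on $u$. Writing $G^{ij}\nabla_id\nabla_jd = \tfrac{u}{w}F^{kl}\xi_k\xi_l$ with $\xi_k = \gamma^{ik}\nabla_id$, the $C^1$ estimates bound $\tfrac{u}{w}$ and $|\xi|$ above and below, but say nothing about the eigenvalues $f_1,\ldots,f_n$ of $[F^{kl}]$. At this stage there is no upper bound on the principal curvatures, so some $f_i$ can be arbitrarily small, and there is no reason the fixed geometric direction $\xi$ (determined by $\partial\Omega$) should avoid the corresponding eigendirection of $[F^{kl}]$ (determined by the unknown $\nabla^2 u$). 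Thus $-2NG^{ij}\nabla_id\nabla_jd$, while nonpositive, cannot be made uniformly $\leq -C_0$ by taking $N$ large, and the residual $O(1)$ term $C|\nabla(u-\underline u)|$ remains unabsorbed.

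The paper's argument is structurally different, and it is precisely here that hypothesis \eqref{0.8} enters --- a condition your argument never invokes. Rather than splitting off $-NL(d^2)$ and trying to exploit $-2NG^{ij}\nabla_id\nabla_jd$ as a good sign, the paper keeps $N\nabla_{kl}d^2$ \emph{inside} the argument of $F$ via the concavity step
\[
G^{kl}\nabla_{kl}(u-\underline u - Nd^2) \;\leq\; \psi(X) - F\Bigl(\bigl[\tfrac{u}{w}\gamma^{ik}(\underline u\,\delta_{kl}+\nabla_{kl}\underline u + N\nabla_{kl}d^2 - 2\beta\,\delta_{kl})\gamma^{lj}\bigr]\Bigr) - 2\beta\sum G^{ii}.
\]
Since $N\nabla_{kl}d^2 \approx 2N\nabla_kd\nabla_ld$ is rank one and the remaining matrix is uniformly positive definite, the eigenvalues of the argument of $F$ dominate $(\lambda_0,\ldots,\lambda_0,\lambda_0+N\mu_0)$ for uniform $\lambda_0,\mu_0>0$. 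Condition \eqref{0.8} then drives this value of $F$ to $+\infty$ as $N\to\infty$, and it is the resulting large negative $\psi - F(\cdots)$ that swallows all $O(1)$ contributions (in particular $G^s\nabla_s(u-\underline u)$, bounded via \eqref{2.8}) and produces the $-1$ in the conclusion. Your subsolution gap $-2\epsilon_0$ alone cannot play this role.
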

  \begin{proof}
  By the convexity of the surface $\bar{X}=\frac{1}{\underline{u}}x$, we can find $\beta>0$ such that
  \begin{equation}\label{2.22}
  [\underline{u}\delta_{ij}+\nabla_{ij}\underline{u}]\geq4\beta I\quad\text{on }\bar\Omega.
  \end{equation}
  Thus
   \begin{equation}\label{2.28}
   \lambda(\underline{u}\delta_{ij}+\nabla_{ij}\underline{u}-3\beta\delta_{ij}) \text{ lies in a compact set of } \Gamma_n^+. \end{equation}
   Since $|\nabla d|=1$ and $-C_1I\leq[\nabla_{ij}d]\leq C_1I$ where $C_1$ only depends on the geometry of $\partial\Omega$, we have
  \begin{equation}\label{2.23}
  Ld=G^{ij}\nabla_{ij}d+G^i\nabla_id\leq C\sum{G^{ii}}+\sum|G^i|\quad\text{in }\Omega\cap B_\delta
  \end{equation}
  and
  \begin{equation}\label{2.24}
  \lambda(\nabla_{ij}\underline{u}+\underline{u}\delta_{ij}+N\nabla_{ij}d^2-2\beta\delta_{ij})\geq\lambda(\nabla_{ij}\underline{u}+\underline{u}\delta_{ij}+2N\nabla_id\nabla_jd-3\beta\delta_{ij})\quad\text{in }\Omega\cap B_\delta
  \end{equation}
  when we make $2N\delta<\frac{\beta}{C_1}$.

  Next, from the concavity of $F$ and the fact that $u\geq\underline{u}$,
  \begin{equation}\label{2.25}
  \begin{aligned}
  &\quad\: F\left(\left[\frac{u}{w}\gamma^{ik}(\underline{u}\delta_{kl}+\nabla_{kl}\underline{u}+N\nabla_{kl}d^2-2\beta\delta_{kl})\gamma^{lj}\right]\right)-\psi(X(x))\\
  &=F\left(\left[\frac{u}{w}\gamma^{ik}(\underline{u}\delta_{kl}+\nabla_{kl}\underline{u}+N\nabla_{kl}d^2-2\beta\delta_{kl})\gamma^{lj}\right]\right)-F\left(\left[\frac{u}{w}\gamma^{ik}(u\delta_{kl}+\nabla_{kl}u)\gamma^{lj}\right]\right)\\
  &\leq\frac{u}{w}F^{ij}\gamma^{ik}\gamma^{lj}(\underline{u}\delta_{kl}+\nabla_{kl}\underline{u}+N\nabla_{kl}d^2-2\beta\delta_{kl}-u\delta_{kl}-\nabla_{kl}u)\\
  &=G^{kl}\nabla_{kl}(\underline{u}-u+Nd^2)+(\underline{u}-u)\sum{G^{ii}}-2\beta\sum{G^{ii}}\\
  &\leq G^{kl}\nabla_{kl}(\underline{u}-u+Nd^2)-2\beta\sum{G^{ii}}.
  \end{aligned}
  \end{equation}
  \begin{equation}\label{2.26}
  \begin{aligned}
  &\quad\:L(u-\underline{u}+td-Nd^2)\\
  &=G^{ij}\nabla_{ij}(u-\underline{u}-Nd^2)+t(G^{ij}\nabla_{ij}d+G^i\nabla_id)+G^i\nabla_i(u-\underline{u})-2NdG^i\nabla_id.
  \end{aligned}
  \end{equation}
  Now combine \eqref{2.23}-\eqref{2.26} and the established $C^1$ bound, we get
  \begin{equation}\label{2.27}
  \begin{aligned}
  &\quad\:L(u-\underline{u}+td-Nd^2)\\
  &\leq\psi(X(x))-F\left(\left[\frac{u}{w}\gamma^{ik}(\underline{u}\delta_{kl}+\nabla_{kl}\underline{u}-3\beta\delta_{kl}+2N\nabla_kd\nabla_ld)\gamma^{lj}\right]\right)\\
  &\quad+(Ct-2\beta)\sum{G^{ii}}+(C+2Nd+t)\sum|G^i|\\
  &=\psi(X(x))-f\left(\lambda\left(\frac{u}{w}\gamma^{ik}(\underline{u}\delta_{kl}+\nabla_{kl}\underline{u}-3\beta\delta_{kl})\gamma^{lj}+\frac{2Nu}{w}\gamma^{ik}\nabla_kd\nabla_ld\gamma^{lj}\right)\right)\\
  &\quad+(Ct-2\beta)\sum{G^{ii}}+(C+2Nd+t)\sum|G^i|.
  \end{aligned}
  \end{equation}
  By \eqref{2.28} and the established $C^1$ bound, there exists a uniform positive constant $\lambda_0$ such that
  \begin{equation}\label{2.29}
  \left[\frac{u}{w}\gamma^{ik}(\underline{u}\delta_{kl}+\nabla_{kl}\underline{u}-3\beta\delta_{kl})\gamma^{lj}\right]\geq\lambda_0I.
  \end{equation}
  We use an orthogonal matrix $P$ to diagonalize $[\frac{2u}{w}\gamma^{ik}\nabla_kd\nabla_ld\gamma^{lj}]$ to be $\mathrm{diag}\{0,0,...,\mu\}$, where by the $C^1$ bound $\mu\geq\mu_0$ for some uniform positive constant $\mu_0$. Therefore
  \begin{equation}\label{2.31}
  \begin{aligned}
  &P^T\left[\frac{u}{w}\gamma^{ik}(\underline{u}\delta_{kl}+\nabla_{kl}\underline{u}-3\beta\delta_{kl})\gamma^{lj}\right]P+
  N\mathrm{diag}\{0,0,...,\mu\}\\
  \geq&\mathrm{diag}\{\lambda_0,\lambda_0,...,\lambda_0+N\mu_0\}.
  \end{aligned}
  \end{equation}
  So by the Minimax Characterization Theorem and \eqref{0.4}, \eqref{0.8}, we get
  \begin{equation}\label{2.32}
  \begin{aligned}
  &\quad\: f\left(\lambda\left(\frac{u}{w}\gamma^{ik}(\underline{u}\delta_{kl}+\nabla_{kl}\underline{u}-3\beta\delta_{kl})\gamma^{lj}+\frac{2Nu}{w}\gamma^{ik}\nabla_kd\nabla_ld\gamma^{lj}\right)\right)\\
  &=f\left(\lambda\left(P^T\left[\frac{u}{w}\gamma^{ik}(\underline{u}\delta_{kl}+\nabla_{kl}\underline{u}-3\beta\delta_{kl})\gamma^{lj}+\frac{2Nu}{w}\gamma^{ik}\nabla_kd\nabla_ld\gamma^{lj}\right]P\right)\right)\\
  &\geq f(\lambda_0,\lambda_0,...,\lambda_0+N\mu_0)\to+\infty\quad\text{as }N\to+\infty.
  \end{aligned}
  \end{equation}
  Therefore by \eqref{2.8}, \eqref{2.27} and \eqref{2.32}, we can choose $t$ small enough such that $Ct\leq\beta$ and $N$ large enough such that
  \begin{equation}\label{2.33}
  \begin{aligned}
  &\quad\:L(u-\underline{u}+td-Nd^2)\\
  &\leq\psi(X(x))-f\left(\lambda\left(\frac{u}{w}\gamma^{ik}(\underline{u}\delta_{kl}+\nabla_{kl}\underline{u}-3\beta\delta_{kl})\gamma^{lj}+\frac{2Nu}{w}\gamma^{ik}\nabla_kd\nabla_ld\gamma^{lj}\right)\right)\\
  &\quad+(Ct-2\beta)\sum{G^{ii}}+(C+2Nd+t)\sum|G^i|\\
  &\leq-1-\beta\sum{G^{ii}}.
  \end{aligned}
  \end{equation}
  Finally we can make $\delta$ even smaller, that is, $\delta\leq\frac{t}{N}$. Then $u-\underline{u}+td-Nd^2\geq0$ on $\partial(\Omega\cap B_\delta)$.
  \end{proof}

  Now consider $Av+B\rho^2$, where $v$ is as in Lemma \ref{2.34}, $\rho$ as the distance function to $x_0$ and $A$, $B$ are large positive constants to be determined. We compute
  \begin{equation}\label{2.35}
  \begin{aligned}
  L\nabla_\alpha u&=G^{ij}\nabla_{ij\alpha}u+G^s\nabla_{s\alpha}u\\
  &=G^{ij}(\nabla_{\alpha ij}u+\delta_{ij}\nabla_\alpha u-\delta_{\alpha j}\nabla_iu)+G^s\nabla_{\alpha s}u\\
  &=\nabla_\alpha G(\nabla^2u,\nabla u,u)-G_u\nabla_\alpha u+\nabla_\alpha u\sum{G^{ii}}-G^{i\alpha}\nabla_i u\\
  &=\nabla_\alpha\psi(X(x))-G_u\nabla_\alpha u+\nabla_\alpha u\sum{G^{ii}}-G^{i\alpha}\nabla_i u\\
  &=\langle-\frac{\nabla_\alpha u}{u^2}x+\frac{1}{u}e_\alpha,\tilde\nabla\psi\rangle-G_u\nabla_\alpha u+\nabla_\alpha u\sum{G^{ii}}-G^{i\alpha}\nabla_i u,
  \end{aligned}
  \end{equation}
  where the standard formula for commuting the order of covariant derivatives on $\mathbb{S}^n$ is applied. Then by the established $C^1$ bound, \eqref{2.9} and the fact that $\underline{u}\in C^\infty(\bar\Omega)$,
  \begin{equation}\label{2.36}
  |L\nabla_\alpha(u-\underline{u})|\leq C\left(1+\sum{G^{ii}}\right).
  \end{equation}
  Therefore we can first pick $B$ large enough to ensure $Av+B\rho^2\geq\pm\nabla_\alpha(u-\underline{u})$ on $\partial(\Omega\cap B_\delta(x_0))$. Then by \eqref{2.21} and \eqref{2.36}, we can pick $A\gg B$ to ensure $L(Av+B\rho^2\pm\nabla_\alpha(u-\underline{u}))\leq0$ in $\Omega\cap B_\delta(x_0)$. By maximum principle, $Av+B\rho^2\geq\pm\nabla_\alpha(u-\underline{u})$ in $\Omega\cap B_\delta(x_0)$. We also notice $(Av+B\rho^2)(x_0)=\nabla_\alpha(u-\underline{u})(x_0)=0$. Thus $\nabla_n(-Av-B\rho^2)(x_0)\leq\nabla_{n\alpha}(u-\underline{u})(x_0)\leq\nabla_n(Av+B\rho^2)(x_0)$, which implies $|\nabla_{n\alpha}u(x_0)|\leq C$, where $C$ depends on $\Omega$, $\inf_\Omega\underline{u}$, $\|\underline{u}\|_{C^3(\bar\Omega)}$, $\|\psi\|_{C^1(\Lambda_K)}$ and the convexity of $\underline{u}$. The mixed, normal, tangential derivatives bound on the boundary is established.

  Now we move on to the pure normal derivative bound. First we prove
  \begin{equation}\label{2.37}
  M\equiv\min_{x\in\partial\Omega}\,\min_{\xi\in{T_x(\partial\Omega)},\,|\xi|=1}(u+\nabla_{\xi\xi}u)\geq c_0
  \end{equation}
  for some uniform $c_0>0$, where $T_x(\partial\Omega)$ denotes the tangent space of $\partial\Omega$ at $x\in\partial\Omega$.

  Following the idea of \cite{locallyconvexgs}, let $\sigma$ be a smooth defining function of $\Omega$, that is, $\sigma$ is defined in a neighborhood of $\Omega$ satisfying
  \begin{equation}\label{2.38}
  \Omega=\{\sigma<0\},\quad\partial\Omega=\{\sigma=0\},\quad\text{and }|\nabla\sigma|=1\text{ on }\partial\Omega.
  \end{equation}
  Note that $\nabla\sigma=-\mathbf{n}$ on $\partial\Omega$ where $\mathbf{n}$ is the interior unit normal to $\partial\Omega$ and
  \begin{equation}\label{2.39}
  \nabla_{\xi\xi}u=\nabla_{\xi\xi}\underline{u}-\mathbf{n}(u-\underline{u})\nabla_{\xi\xi}\sigma\quad\text{on }\partial\Omega
  \end{equation}
  for any $\xi$ tangent to $\partial\Omega$.

  Suppose $M$ is achieved at $x_0\in\partial\Omega$ with $\xi\in T_{x_0}(\partial\Omega)$. Same as in the beginning of this subsection, we construct a local orthonormal frame field $e_1,...,e_n$ around $x_0$ and make $e_1(x_0)=\xi$. Then by \eqref{2.39}
  \begin{equation}\label{2.40}
  M=u(x_0)+\nabla_{11}u(x_0)=\underline{u}(x_0)+\nabla_{11}\underline{u}(x_0)-\mathbf{n}(u-\underline{u})(x_0)\nabla_{11}\sigma(x_0).
  \end{equation}
  We may assume
  \begin{equation}\label{2.41}
  \mathbf{n}(u-\underline{u})(x_0)\nabla_{11}\sigma(x_0)>\frac{1}{2}(\underline{u}(x_0)+\nabla_{11}\underline{u}(x_0)),
  \end{equation}
  for otherwise we are done because of the strictly local convexity of the graph $\bar{X}$.

  Let $\zeta=(\zeta_1,...,\zeta_n)$ be defined as
  \begin{equation}\label{2.42}
  \begin{aligned}
  \zeta_1&=-\nabla_n\sigma\left((\nabla_1\sigma)^2+(\nabla_n\sigma)^2\right)^{-1/2},\\
  \zeta_j&=0,\quad2\leq j\leq n-1,\\
  \zeta_n&=\nabla_1\sigma\left((\nabla_1\sigma)^2+(\nabla_n\sigma)^2\right)^{-1/2},
  \end{aligned}
  \end{equation}
  in $\bar\Omega\cap B_\delta(x_0)$. Notice the well-definedness of $\zeta$ is ensured by \eqref{2.38} and a sufficiently small $\delta$. From \eqref{2.41} and since $\nabla_{ij}\sigma\zeta_i\zeta_j$ is continuous and $0\leq\mathbf{n}(u-\underline{u})\leq C$ on $\partial\Omega$, there exists $c_1>0$ and $\delta>0$ (which may be even smaller) such that
  \begin{equation}\label{2.43}
  \nabla_{ij}\sigma\zeta_i\zeta_j(x)\geq\frac{1}{2}\nabla_{ij}\sigma\zeta_i\zeta_j(x_0)=\frac{\nabla_{11}\sigma(x_0)}{2}>\frac{\underline{u}(x_0)+\nabla_{11}\underline{u}(x_0)}{4\mathbf{n}(u-\underline{u})(x_0)}\geq c_1\quad\text{in }\Omega\cap B_\delta(x_0).
  \end{equation}
  Thus the function $\Phi:=\frac{\underline{u}+\nabla_{ij}\underline{u}\zeta_i\zeta_j-M}{\nabla_{ij}\sigma\zeta_i\zeta_j}$ is smooth and bounded in $\Omega\cap B_\delta(x_0)$. Note on the boundary $\zeta=(1,0,...,0)$ and by \eqref{2.39},
  \begin{equation}\label{2.44}
  \underline{u}+\nabla_{ij}\underline{u}\zeta_i\zeta_j+(\nabla(u-\underline{u})\cdot\nabla\sigma)\nabla_{ij}\sigma\zeta_i\zeta_j=\underline{u}+\nabla_{11}u=u+\nabla_{11}u\geq M\quad\text{on }\partial\Omega\cap B_\delta(x_0).
  \end{equation}
  Therefore
  \begin{equation}\label{2.45}
  \Phi+\nabla(u-\underline{u})\cdot\nabla\sigma\geq0\quad\text{on }\partial\Omega\cap B_\delta(x_0).
  \end{equation}
  Next we apply the linearized operator $L=G^{ij}\nabla_{ij}+G^s\nabla_s$ again,
  \begin{equation}\label{2.46}
  \begin{aligned}
  &\quad\: L(\Phi+\nabla(u-\underline{u})\cdot\nabla\sigma)\\
  &=G^{ij}\nabla_{ij}\Phi+G^s\nabla_s\Phi+G^{ij}\nabla_{ij}(\nabla u\cdot\nabla\sigma)-G^{ij}\nabla_{ij}(\nabla\underline{u}\cdot\nabla\sigma)\\
  &\quad+G^s\nabla_s(\nabla u\cdot\nabla\sigma)-G^s\nabla_s(\nabla\underline{u}\cdot\nabla\sigma),
  \end{aligned}
  \end{equation}
  where the terms $|G^{ij}\nabla_{ij}\Phi|$, $|G^s\nabla_s\Phi|$, $|G^{ij}\nabla_{ij}(\nabla\underline{u}\cdot\nabla\sigma)|$ and $|G^s\nabla_s(\nabla\underline{u}\cdot\nabla\sigma)|$ are clearly controlled by $C(1+\sum{G^{ii}})$. We only need to compute
  \begin{equation}\label{2.47}
  \begin{aligned}
  &\quad\:G^{ij}\nabla_{ij}(\nabla u\cdot\nabla\sigma)+G^s\nabla_s(\nabla u\cdot\nabla\sigma)\\
  &=G^{ij}\nabla_{ij}(\nabla_ku\nabla_k\sigma)+G^s\nabla_s(\nabla_ku\nabla_k\sigma)\\
  &=G^{ij}(\nabla_{ijk}u\nabla_k\sigma+\nabla_ku\nabla_{ijk}\sigma+2\nabla_{ik}u\nabla_{jk}\sigma)+G^s\nabla_{sk}u\nabla_k\sigma+G^s\nabla_ku\nabla_{sk}\sigma\\
  &=\nabla_k\sigma(G^{ij}\nabla_{ijk}u+G^s\nabla_{sk}u)+2G^{ij}\nabla_{ik}u\nabla_{jk}\sigma+\nabla_ku(G^{ij}\nabla_{ijk}\sigma+G^s\nabla_{sk}\sigma),
  \end{aligned}
  \end{equation}
  where $|\nabla_ku(G^{ij}\nabla_{ijk}\sigma+G^s\nabla_{sk}\sigma)|$ is controlled by $C(1+\sum{G^{ii}})$. Same as \eqref{2.35},
  \begin{equation}\label{2.48}
  G^{ij}\nabla_{ijk}u+G^s\nabla_{sk}u=\langle-\frac{\nabla_ku}{u^2}x+\frac{1}{u}e_k,\tilde\nabla\psi\rangle-G_u\nabla_ku+\nabla_ku\sum{G^{ii}}-G^{ik}\nabla_iu,
  \end{equation}
  so $|\nabla_k\sigma(G^{ij}\nabla_{ijk}u+G^s\nabla_{sk}u)|$ is controlled by $C(1+\sum{G^{ii}})$. The last term to be controlled is $2G^{ij}\nabla_{ik}u\nabla_{jk}\sigma$. But we notice by \eqref{1.3}
  \begin{equation}\label{2.49}
  \nabla_{kl}u=\frac{w}{u}\gamma_{ki}a_{ij}\gamma_{jl}-u\delta_{kl},
  \end{equation}
  so
  \begin{equation}\label{2.50}
  G^{ij}\nabla_{ik}u=G^{ij}\left(\frac{w}{u}\gamma_{kl}a_{ls}\gamma_{si}-u\delta_{ki}\right)=F^{si}\gamma^{ij}\gamma_{kl}a_{ls}-uG^{kj}.
  \end{equation}
  Therefore $|2G^{ij}\nabla_{ik}u\nabla_{jk}\sigma|$ is also controlled by $C(1+\sum{G^{ii}})$, and thus $|G^{ij}\nabla_{ij}(\nabla u\cdot\nabla\sigma)+G^s\nabla_s(\nabla u\cdot\nabla\sigma)|$ is controlled by $C(1+\sum{G^{ii}})$. Above all,
  \begin{equation}\label{2.51}
  L(\Phi+\nabla(u-\underline{u})\cdot\nabla\sigma)\leq C\left(1+\sum{G^{ii}}\right)\quad\text{in }\Omega\cap B_\delta(x_0).
  \end{equation}

  Now we can apply Lemma \ref{2.34} to construct the barrier $Av+B\rho^2$ as before. Choose $A\gg B\gg1$ so that
  \begin{equation}\label{2.52}
  \begin{aligned}
  L(Av+B\rho^2+\Phi+\nabla(u-\underline{u})\cdot\nabla\sigma)&\leq0\quad\text{in }\Omega\cap B_\delta(x_0),\\
  Av+B\rho^2+\Phi+\nabla(u-\underline{u})\cdot\nabla\sigma&\geq0\quad\text{on }\partial(\Omega\cap B_\delta(x_0)).
  \end{aligned}
  \end{equation}
  Then by the maximum principle and the fact that $Av+B\rho^2+\Phi+\nabla(u-\underline{u})\cdot\nabla\sigma=0$ at $x_0$, we get
  \begin{equation}\label{2.53}
  A\nabla_nv(x_0)+\nabla_n\Phi(x_0)-\nabla_{nn}(u-\underline{u})(x_0)+\nabla_n(u-\underline{u})(x_0)\nabla_{nn}\sigma(x_0)\geq0,
  \end{equation}
  which implies $\nabla_{nn}u(x_0)\leq C$. We thus have established $|\nabla^2u|\leq C$ at $x_0$. Then the principal curvatures of $\Sigma$ at $X(x_0)$, which are the eigenvalues of $[\frac{u}{w}\gamma^{ik}(u\delta_{kl}+\nabla_{kl}u)\gamma^{lj}]$, also have an upper bound. By the compactness argument and condition \eqref{0.6}, we get that the principal curvatures at $X(x_0)$ also have a uniform positive lower bound, which in turn gives a uniform positive lower bound of the eigenvalues of $[u\delta_{kl}+\nabla_{kl}u]$ at $x_0$. Therefore \eqref{2.37} is established. So now for every $x\in\partial\Omega$, the eigenvalues of $[u\delta_{\alpha\beta}+\nabla_{\alpha\beta}u]_{\alpha,\beta\leq n-1}$ have an uniform positive lower bound, which finally implies an upper bound for $u+\nabla_{nn}u$. $|\nabla_{nn}u|\leq C$ on $\partial\Omega$ is established and hence the bound for $|\nabla^2u|$ on $\partial\Omega$, which depends on $\Omega$, $\inf_\Omega\underline{u}$, $\|\underline{u}\|_{C^4(\bar\Omega)}$, $\|\psi\|_{C^1(\Lambda_K)}$, $\inf_{\Lambda_K}\psi$ and the convexity of $\underline{u}$.

  \subsection{\textit{Global bound for $|\nabla^2u|$}.} In this subsection we derive the global $C^2$ bound. It suffices to estimate $\max\kappa_i$, the maximum of the principal curvatures of $\Sigma$.

Choose a local orthonormal frame $\{\tau_1, \tau_2,..., \tau_n\}$ on $\Sigma$. $\nu$ is the inward unit normal. $\tilde{\nabla}$ is the connection of the Euclidean Space $\mathbb{R}^{n+1}$. $\bar\nabla$ is the induced Riemannian connection on $\Sigma$. $h$ is the second fundamental form of $\Sigma$. $h_{ij}=h(\tau_i,\tau_j)=\langle\tilde{\nabla}_{\tau_i}\tau_j,\nu\rangle=-\langle\tau_j,\tilde{\nabla}_{\tau_i}\nu\rangle$. We adopt the notation $h_{ijk}=\bar\nabla_kh_{ij}$, $h_{ijkl}=\bar\nabla_{kl}h_{ij}=\bar\nabla_l\bar\nabla_kh_{ij}$, etc. For a function $v$ defined on $\Sigma$, we write $v_i=\bar\nabla_iv$, $v_{ij}=\bar\nabla_{ij}v$.

First we need the standard formulas for commuting the order of covariant derivatives of second fundamental form. Since $\Sigma$ stays in $\mathbb{R}^{n+1}$, we have the following formulas (see \cite{schoensimonyau}), 
\begin{equation}\label{2.100}
h_{ijk}=h_{ikj},
\end{equation}
\begin{equation}\label{2.101}
h_{iijj}-h_{jjii}=h_{jj}\sum_mh_{im}^2-h_{ii}\sum_mh_{jm}^2.
\end{equation}

Next we need to differentiate two important quantities on $\Sigma$. $\rho$ is the standard Euclidean distance to the origin, and set
\begin{equation}\label{2.102}
\beta(X)=-\frac{1}{\rho}\langle\nu,X\rangle,\quad X\in\Sigma.
\end{equation}
By \eqref{1.2}, we get
  \begin{equation}\label{2.54}
  \beta=\frac{\nabla u+ux}{w}\cdot x=\frac{u}{w},
  \end{equation}
  which has both a positive upper bound and positive lower bound by previous estimates. We have
\begin{equation}\label{2.103}
\begin{aligned}
2(\rho_i\rho_j+\rho\rho_{ij})&=\bar\nabla_{ij}\rho^2=\bar\nabla_{ij}\langle X,X\rangle=\tau_j\tau_i\langle X,X\rangle-\bar\nabla_{\tau_j}\tau_i\langle X,X\rangle\\
&=2\tau_j\langle\tau_i,X\rangle-2\langle\bar\nabla_{\tau_j}\tau_i,X\rangle=2\langle\tilde\nabla_{\tau_j}\tau_i,X\rangle+2\langle\tau_i,\tau_j\rangle-2\langle\bar\nabla_{\tau_j}\tau_i,X\rangle\\
&=2\langle\tilde\nabla_{\tau_j}\tau_i-\bar\nabla_{\tau_j}\tau_i,X\rangle+2\delta_{ij}=2\langle h_{ij}\nu,X\rangle+2\delta_{ij}\\
&=-2\rho\beta h_{ij}+2\delta_{ij}.
\end{aligned}
\end{equation}
Therefore
\begin{equation}\label{2.104}
\rho_{ij}=\frac{1}{\rho}\delta_{ij}-\beta h_{ij}-\frac{1}{\rho}\rho_i\rho_j.
\end{equation}
By \eqref{2.102} we have $\rho\beta=-\langle\nu,X\rangle$. Therefore
\begin{equation}\label{2.105}
\begin{aligned}
\rho_i\beta+\rho\beta_i&=\bar\nabla_i(\rho\beta)=-\tau_i\langle\nu,X\rangle=-\langle\tilde\nabla_{\tau_i}\nu,X\rangle-\langle\nu,\tau_i\rangle\\
&=\langle \sum_jh_{ij}\tau_j,X\rangle=\sum_jh_{ij}\langle\tau_j,\rho\tilde\nabla\rho\rangle\\
&=\sum_jh_{ij}\langle\tau_j,\rho(\bar\nabla\rho+\langle\tilde\nabla\rho,\nu\rangle\nu)\rangle=\sum_jh_{ij}\langle\tau_j,\rho\bar\nabla\rho\rangle\\
&=\rho\sum_jh_{ij}\rho_j.
\end{aligned}
\end{equation}
Therefore
\begin{equation}\label{2.106}
\beta_i=\sum_jh_{ij}\rho_j-\frac{\beta}{\rho}\rho_i.
\end{equation}
Differentiate \eqref{2.106} again, we get
\begin{equation}\label{2.107}
\begin{aligned}
\beta_{ij}&=\bar\nabla_j\left(\sum_kh_{ik}\rho_k-\frac{\beta}{\rho}\rho_i\right)\\
&=\sum_kh_{ikj}\rho_k+h_{ik}\rho_{kj}-\frac{1}{\rho}\beta_j\rho_i+\frac{\beta}{\rho^2}\rho_i\rho_j-\frac{\beta}{\rho}\rho_{ij}.
\end{aligned}
\end{equation}
Plug \eqref{2.100}, \eqref{2.104} and \eqref{2.106} into \eqref{2.107}, we get
\begin{equation}\label{2.108}
\beta_{ij}=\sum_kh_{ijk}\rho_k-\frac{1}{\rho}\rho_i\rho_kh_{jk}-\frac{1}{\rho}\rho_j\rho_kh_{ik}+\frac{3\beta}{\rho^2}\rho_i\rho_j+\frac{1+\beta^2}{\rho}h_{ij}-\beta\sum_kh_{ik}h_{kj}-\frac{\beta}{\rho^2}\delta_{ij}.
\end{equation}

Now we are ready to derive a bound for principal curvatures. Set
  \begin{equation}\label{2.55}
  M:=\max_{\Sigma}\frac{\kappa_{\max}(X)}{1-e^{-A\beta(X)}},
  \end{equation}
  where $\kappa_{\max}(X)$ means the largest principal curvature of $\Sigma$ at $X$ and $A$ is a positive constant to be chosen later. It suffices to derive a bound for $M$. If $M$ is achieved on $\partial\Sigma$, by the established $C^2$ bound on the boundary we are done.

  Therefore we just assume $M$ is achieved at an interior point $X_0\in\Sigma$. We choose the local orthonormal frame $\{\tau_1,\tau_2,...,\tau_n\}$ around $X_0$ such that $h_{ij}$ is diagonal at $X_0$, i.e., $h_{ij}(X_0)=\kappa_i\delta_{ij}$, and $h_{11}(X_0)=\kappa_1$ is the largest principal curvature at $X_0$. Then we shall note at $X_0$ the formulas \eqref{2.101}, \eqref{2.106} and \eqref{2.108} can be simplified as follows,
\begin{equation}\label{2.109}
h_{iijj}-h_{jjii}=(\kappa_i-\kappa_j)\kappa_i\kappa_j,
\end{equation}
\begin{equation}\label{2.110}
\beta_i=\kappa_i\rho_i-\frac{\beta}{\rho}\rho_i,
\end{equation}
\begin{equation}\label{2.111}
\beta_{ii}=\sum_kh_{iik}\rho_k+\frac{3\beta}{\rho^2}\rho_i^2-\frac{\beta}{\rho^2}+\frac{1+\beta^2}{\rho}\kappa_i-\frac{2}{\rho}\rho_i^2\kappa_i-\beta\kappa_i^2.
\end{equation}

In the rest of this subsection all the computations are calculated at $X_0$. By our assumption the function $\ln\left(\frac{h_{11}}{1-e^{-A\beta}}\right)$ achieves its local maximum at $X_0$. Therefore we have
\begin{equation}\label{2.112}
0=\bar\nabla_i\ln\left(\frac{h_{11}}{1-e^{-A\beta}}\right)
=\frac{h_{11i}}{h_{11}}-\frac{A\beta_i}{e^{A\beta}-1},
\end{equation} 
\begin{equation}\label{2.113}
0\geq\bar\nabla_{ii}\ln\left(\frac{h_{11}}{1-e^{-A\beta}}\right)
=\frac{h_{11ii}}{h_{11}}-\frac{h_{11i}^2}{h_{11}^2}-\frac{A\beta_{ii}}{e^{A\beta}-1}+\frac{A^2e^{A\beta}\beta_i^2}{(e^{A\beta}-1)^2}.
\end{equation}
We plug \eqref{2.109}-\eqref{2.112} into \eqref{2.113}. Then we can get
\begin{equation}\label{2.114}
\begin{aligned}
h_{ii11}\leq& \kappa_i^2\kappa_1-\kappa_1^2\kappa_i+\frac{A\kappa_1}{e^{A\beta}-1}\left(\sum_kh_{iik}\rho_k+\frac{3\beta}{\rho^2}\rho_i^2-\frac{\beta}{\rho^2}+\frac{1+\beta^2}{\rho}\kappa_i-\frac{2}{\rho}\rho_i^2\kappa_i-\beta\kappa_i^2\right)\\
&-\frac{A^2\kappa_1}{e^{A\beta}-1}\left(\kappa_i^2\rho_i^2+\frac{\beta^2}{\rho^2}\rho_i^2-\frac{2\beta}{\rho}\kappa_i\rho_i^2\right).
\end{aligned}
\end{equation}

Next we shall differentiate the equation
\begin{equation}\label{2.115}
F([h_{ij}])=\psi,
\end{equation}
where $F$ is defined as in \eqref{1.6}. We get
\begin{equation}\label{2.116}
\sum_{i,j}F^{ij}h_{ijk}=\psi_k.
\end{equation}
Choose $k=1$ in \eqref{2.116} and differentiate it again by $\tau_1$, we get
\begin{equation}\label{2.118}
\sum_{i,j,k,l}F^{ij,kl}h_{kl1}h_{ij1}+\sum_{i,j}F^{ij}h_{ij11}=\psi_{11}.
\end{equation}
Since at $X_0$ $h_{ij}$ is diagonal, $F^{ij}$ is also diagonal and $F^{ij}=f_i\delta_{ij}$. We also note that $F$ is concave. Therefore \eqref{2.116} and \eqref{2.118} can be simplified as
\begin{equation}\label{2.117}
\sum_if_ih_{iik}=\psi_k,
\end{equation}
\begin{equation}\label{2.119}
\psi_{11}\leq\sum_{i}f_ih_{ii11}.
\end{equation}

Combining \eqref{2.114} and \eqref{2.119}, we get
\begin{equation}\label{2.120}
\begin{aligned}
\psi_{11}\leq&\kappa_1\sum_if_i\kappa_i^2-\kappa_1^2\sum_if_i\kappa_i+\frac{A\kappa_1}{e^{A\beta}-1}\sum_{i,k}f_ih_{iik}\rho_k\\
&+\frac{A\kappa_1}{e^{A\beta}-1}\sum_i\left(\frac{3\beta}{\rho^2}f_i\rho_i^2-\frac{\beta}{\rho^2}f_i+\frac{1+\beta^2}{\rho}f_i\kappa_i-\frac{2}{\rho}f_i\kappa_i\rho_i^2-\beta f_i\kappa_i^2\right)\\
&-\frac{A^2\kappa_1}{e^{A\beta}-1}\sum_i\left(f_i\kappa_i^2\rho_i^2+\frac{\beta^2}{\rho^2}f_i\rho_i^2-\frac{2\beta}{\rho}f_i\kappa_i\rho_i^2\right).
\end{aligned}
\end{equation}
From \eqref{2.117} we see the term $\displaystyle\sum_{i,k}f_ih_{iik}\rho_k$ in \eqref{2.120} can replaced by $\displaystyle\sum_k\psi_k\rho_k$. Rearranging terms in \eqref{2.120} we get
\begin{equation}\label{2.121}
\begin{aligned}
\psi_{11}\leq& \left(\kappa_1-\frac{A\beta\kappa_1}{e^{A\beta}-1}\right)\sum_if_i\kappa_i^2-\kappa_1^2\sum_if_i\kappa_i+\frac{(1+\beta^2)A\kappa_1}{\rho\left(e^{A\beta}-1\right)}\sum_if_i\kappa_i\\
&+\frac{A\kappa_1}{e^{A\beta}-1}\sum_k\psi_k\rho_k-\frac{A\beta\kappa_1}{\rho^2\left(e^{A\beta}-1\right)}\sum_if_i+\frac{A\beta\kappa_1(3-A\beta)}{\rho^2\left(e^{A\beta}-1\right)}\sum_if_i\rho_i^2\\
&+\frac{2A\kappa_1(A\beta-1)}{\rho\left(e^{A\beta}-1\right)}\sum_if_i\kappa_i\rho_i^2-\frac{A^2\kappa_1}{e^{A\beta}-1}\sum_if_i\kappa_i^2\rho_i^2.
\end{aligned}
\end{equation}
Since $\beta$ has a positive lower bound, we can choose $A$ large enough to ensure $3-A\beta<0$. Then in \eqref{2.121} we throw away some negative terms in the right hand side of the inequality, which are the 5th, 6th and 8th terms, getting
\begin{equation}\label{2.122}
\begin{aligned}
&\kappa_1^2\sum_if_i\kappa_i+\left(\frac{A\beta}{e^{A\beta}-1}-1\right)\kappa_1\sum_if_i\kappa_i^2\\
\leq&-\psi_{11}+\left(\frac{A(1+\beta^2)}{\rho(e^{A\beta}-1)}\sum_if_i\kappa_i+\frac{A}{e^{A\beta}-1}\sum_k\psi_k\rho_k+\frac{2A(A\beta-1)}{\rho(e^{A\beta}-1)}\sum_if_i\kappa_i\rho_i^2\right)\kappa_1.
\end{aligned}
\end{equation}
Since $\kappa_1$ is the largest principal curvature at $X_0$, we have $\displaystyle\kappa_1\sum_if_i\kappa_i^2\leq\kappa_1^2\sum_if_i\kappa_i$. We shall also note that $\frac{A\beta}{e^{A\beta}-1}-1\leq0$. Therefore
\begin{equation}\label{2.123}
\left(\frac{A\beta}{e^{A\beta}-1}-1\right)\kappa_1\sum_if_i\kappa_i^2\geq\left(\frac{A\beta}{e^{A\beta}-1}-1\right)\kappa_1^2\sum_if_i\kappa_i.
\end{equation}
Combining \eqref{2.122} and \eqref{2.123}, we get
\begin{equation}\label{2.124}
\begin{aligned}
&\left(\frac{A\beta}{e^{A\beta}-1}\sum_if_i\kappa_i\right)\kappa_1^2\\
\leq&-\psi_{11}+\left(\frac{A(1+\beta^2)}{\rho(e^{A\beta}-1)}\sum_if_i\kappa_i+\frac{A}{e^{A\beta}-1}\sum_k\psi_k\rho_k+\frac{2A(A\beta-1)}{\rho(e^{A\beta}-1)}\sum_if_i\kappa_i\rho_i^2\right)\kappa_1.
\end{aligned}
\end{equation}
From condition \eqref{0.5}, \eqref{0.6} and \eqref{0.7},
  \begin{equation}\label{2.86}
  \sigma_0\leq\sum_if_i\kappa_i\leq f(\kappa_1,...,\kappa_n)=\psi\leq C,
  \end{equation}
  where $\sigma_0$ and $C$ are uniform positive constants. It is also straight forward to see that
  \begin{equation}\label{2.87}
  \tilde\nabla_{11}\psi-\bar\nabla_{11}\psi=-(\tilde\nabla_{\tau_1}\tau_1-\bar\nabla_{\tau_1}\tau_1)\psi=-h_{11}\nu(\psi)=-\kappa_1\nu(\psi).
  \end{equation}
  Therefore from the smoothness of $\psi$ we see that $|\psi_{11}|$ is controlled by $C(1+\kappa_1)$. As for the second long term in the right hand side of \eqref{2.124}, since $\rho$, $\beta$ are all well controlled terms by previous estimates, and it is easily seen that $|\psi_i|<C$, $|\rho_i|\leq |\tilde\nabla\rho|=1$, combining with \eqref{2.86} we see that it is controlled by $C\kappa_1$. Recall the definition \eqref{2.55} of $M$, then \eqref{2.124} implies
  \begin{equation}\label{2.125}
  c_0M^2\leq C(1+M)
  \end{equation}
  for some uniform positive constants $c_0$ and $C$, which yields an upper bound for $M$. As we said at the beginning of this subsection, this gives an upper bound for principal curvatures, and hence $|\nabla^2u|$ in $\Omega$. The global $C^2$ bound is established. We also note by compactness argument the upper bound for principal curvatures implies a positive lower bound for principal curvatures, which will also be used when we prove the existence. We combine all these estimates in the following theorem.
  \begin{thm}\label{2.91}
  Let $u\geq\underline{u}$ be a strictly locally convex solution of \eqref{1.14} and $\Sigma$: $X=\frac{1}{u}x$ the corresponding radial graph. $\kappa_i$ is the principal curvature of $\Sigma$. Then we have the following estimates:
  \begin{equation}\label{2.92}
  \|u\|_{C^2(\bar\Omega)}\leq C,\quad C^{-1}\leq\kappa_i\leq C,
  \end{equation}
  where $C$ is a positive constant depending on $\Omega$, $\inf_\Omega\underline{u}$, $\|\underline{u}\|_{C^4(\bar\Omega)}$, $\|\psi\|_{C^2(\Lambda_K)}$, $\inf_{\Lambda_K}\psi$ and the convexity of $\underline{u}$.
  \end{thm}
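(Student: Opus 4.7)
The plan is to assemble the estimates in four stages, moving from low-order to high-order derivatives and from the boundary to the interior, and then closing the loop on strict local convexity using condition \eqref{0.6}. Since $u\geq\underline u$ is a strictly locally convex solution with $u=\underline u=\varphi$ on $\partial\Omega$, Theorem \ref{1.22} immediately supplies the $C^{0}$ and $C^{1}$ bounds $K^{-1}\leq u\leq K$ and $|\nabla u|\leq C$ on $\bar\Omega$, using that $\Omega$ contains no hemisphere for the upper bound on $u$. This step fixes the tube $\Lambda_K$ on which $\psi$ will be controlled.

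Next I would establish the boundary $C^{2}$ bound in three substeps, as sketched in Subsection (a). First, the pure tangential second derivatives are read off directly from $u-\underline u=0$ on $\partial\Omega$ via \eqref{2.6}. Second, for the mixed tangential-normal derivatives I would use the linearized operator $L=G^{ij}\nabla_{ij}+G^s\nabla_s$, the bounds $\sum|G^s|\leq C$ and $|G_u|\leq C(1+\sum G^{ii})$ from the preceding lemma, and the barrier $Av+B\rho^{2}$ built from the function $v=u-\underline u+td-Nd^{2}$ of Lemma \ref{2.34}; the key point is that Lemma \ref{2.34} furnishes a negative definite bound $Lv\leq -1-\beta\sum G^{ii}$ dominating the commutator term $L\nabla_\alpha(u-\underline u)$, so the maximum principle forces $|\nabla_{n\alpha}u(x_0)|\leq C$. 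Third, for the pure normal derivative I would pass to the auxiliary function $\Phi+\nabla(u-\underline u)\cdot\nabla\sigma$ at the minimum point $x_0$ of $u+\nabla_{\xi\xi}u$ over unit tangent directions on $\partial\Omega$; the computation \eqref{2.47}--\eqref{2.51} together with Lemma \ref{2.34} again gives, by the Hopf boundary lemma, the uniform lower bound $M\geq c_0$ in \eqref{2.37}, which in turn pins down $\nabla_{nn}u$ on $\partial\Omega$.

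With the $C^{2}$ bound on $\partial\Omega$ in hand, I would derive the global $C^{2}$ estimate by bounding the quantity $M=\max_{\Sigma}\kappa_{\max}/(1-e^{-A\beta})$, where $\beta=u/w$ has uniform positive upper and lower bounds from the $C^{1}$ estimate. If $M$ is attained on $\partial\Sigma$ we are done by the boundary bound; otherwise at an interior maximum $X_{0}$ I would diagonalize the second fundamental form, use the commutator identity \eqref{2.109} and the concavity inequality \eqref{2.119}, and plug the first- and second-order extremum conditions \eqref{2.112}--\eqref{2.113} into the twice-differentiated equation. After replacing $\sum_{i,k}f_ih_{iik}\rho_k$ by $\sum_k\psi_k\rho_k$ via \eqref{2.117} and discarding nonpositive terms (this is where $A$ is chosen large enough so that $3-A\beta<0$), we reach the inequality \eqref{2.124}. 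Finally, the structural bound \eqref{0.7} giving $\sum f_i\kappa_i\geq\sigma_0$ together with the controlled quantities $\rho$, $\beta$, $\psi$, $|\psi_i|$, $|\psi_{11}|\leq C(1+\kappa_1)$ collapse \eqref{2.124} into the quadratic inequality $c_0 M^{2}\leq C(1+M)$, yielding a uniform bound on $\kappa_{\max}$ and hence on $|\nabla^{2}u|$.

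The main obstacle is precisely this last step: one must extract a clean quadratic inequality for $\kappa_1$ from the many terms produced by differentiating $\ln(\kappa_1/(1-e^{-A\beta}))$ twice, and the coefficient $A\beta/(e^{A\beta}-1)\sum f_i\kappa_i$ in front of $\kappa_1^{2}$ must be bounded below away from zero; this is where both the lower bound on $\beta$ coming from the $C^{1}$ estimate and the structural hypothesis \eqref{0.7} are essential. To close the proof I would use a standard compactness argument: since $\kappa_\Sigma\in\Gamma_n^{+}$, $\sum\kappa_i$ is bounded above, and $f(\kappa_\Sigma)=\psi\geq\inf_{\Lambda_K}\psi>0$, condition \eqref{0.6} forces $\kappa_i$ to stay a definite distance from $\partial\Gamma_n^{+}$, giving the uniform lower bound $\kappa_i\geq C^{-1}$. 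Combining the $C^{1}$ estimate, the boundary and global $C^{2}$ bounds, and this lower curvature bound yields \eqref{2.92} with the stated dependencies.
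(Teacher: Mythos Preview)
Your proposal is correct and follows the paper's own approach essentially step for step: Theorem~\ref{1.22} for the $C^1$ bound, then the three-part boundary $C^2$ estimate of Subsection~(a) (pure tangential via \eqref{2.6}, mixed via the barrier of Lemma~\ref{2.34}, pure normal via \eqref{2.37}), then the interior maximum principle argument of Subsection~(b) using the test quantity $\kappa_{\max}/(1-e^{-A\beta})$ and \eqref{0.7} to reach the quadratic inequality \eqref{2.125}, and finally the compactness argument with \eqref{0.6} for the lower curvature bound. There is no substantive difference in strategy or in the key ingredients.
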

  \hspace{3cm}
  \section{Existence}\label{3.1}
  In this section we apply the classical method of continuity (see \cite{trudinger}) and the degree theory in \cite{degreethm} developed by Y.Y. Li to establish the existence of solution of \eqref{0.2}-\eqref{0.3}. As noted at the end of Section \ref{1.18}, we shall work on equation \eqref{1.21}. Precisely, we work on two auxiliary forms of \eqref{1.21}, that is,
  \begin{equation}\label{3.5}
  \begin{aligned}
  H(\nabla^2v,\nabla v,v)&=\left(t\epsilon+(1-t)\frac{\underline{\psi}(x)}{e^{2\underline{v}}}\right)e^{2v}\quad\text{in}\:\Omega,\\
  v&=\underline{v}\quad\text{on}\:\partial\Omega
  \end{aligned}
  \end{equation}
  and
  \begin{equation}\label{3.7}
  \begin{aligned}
  H(\nabla^2v,\nabla v,v)&=t\psi(X(x))+(1-t)\epsilon e^{2v}\quad\text{in}\:\Omega,\\
  v&=\underline{v}\quad\text{on}\:\partial\Omega,
  \end{aligned}
  \end{equation}
  where $t\in[0,1]$ and $\epsilon$ is a fixed small number such that
  \begin{equation}\label{3.8}
  \underline{\psi}(x)>\psi(\bar X(x))+\epsilon K^2\quad\text{in}\:\bar\Omega.
  \end{equation}

  Before going to the proof of existence, we need some preparation. We first introduce an important property of the operator $H$ in \eqref{1.21}, which, compared with \eqref{2.9} and \eqref{2.19}, explains why we use $H$ instead of $G$.

  \begin{lem}\label{3.2}
  Let $v$ be a strictly locally convex solution of $H(\nabla^2v,\nabla v,v)=\psi$, then $H_v:=\frac{\partial H}{\partial v}\leq\psi$.
  \end{lem}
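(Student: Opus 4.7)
The plan is to read off $H_v$ directly from the formula \eqref{1.19} and then apply the structural assumptions \eqref{0.5} and \eqref{0.6} to the result. The key observation motivating the whole calculation is that in
\[
a_{ij} = \frac{e^v}{w}\bigl(\delta_{ij}+\gamma^{ik}\nabla_{kl}v\,\gamma^{lj}\bigr),
\]
the quantities $w=\sqrt{1+|\nabla v|^2}$ and $\gamma^{ij}=\delta_{ij}-\frac{\nabla_iv\nabla_jv}{w(1+w)}$ depend only on $\nabla v$. Consequently, when we differentiate in the $v$-slot of $H$ with $\nabla v$ and $\nabla^2v$ held fixed, the only thing that varies is the prefactor $e^v$, and this gives the identity
\[
\frac{\partial a_{ij}}{\partial v}=a_{ij}.
\]
This is, in effect, the reason this choice of unknown is preferable to $u$: the matrix $[a_{ij}]$ is homogeneous of degree one in $e^v$, so differentiating in $v$ reproduces $[a_{ij}]$ itself rather than generating extra curvature-type terms.

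With the preparatory observation in place, the chain rule gives
\[
H_v=F^{ij}(a)\,\frac{\partial a_{ij}}{\partial v}=F^{ij}(a)\,a_{ij}=\sum_i f_i(\lambda)\,\lambda_i,
\]
where $\lambda=\lambda([a_{ij}])=\kappa_\Sigma$ and the last equality is the identity \eqref{1.10} coming from the simultaneous diagonalizability of $[F^{ij}]$ and $[a_{ij}]$. It remains to dominate $\sum_i f_i\lambda_i$ by $f(\lambda)=\psi$. For this, concavity of $f$ on $\Gamma_n^+$ (condition \eqref{0.5}) gives the supporting-hyperplane inequality
\[
f(\mu)\le f(\lambda)+\sum_i f_i(\lambda)(\mu_i-\lambda_i)\qquad\text{for all }\mu\in\Gamma_n^+,
\]
and taking $\mu=(\epsilon,\ldots,\epsilon)$ and letting $\epsilon\to 0^+$, using \eqref{0.6} to conclude $f(\mu)\to 0$, yields $0\le f(\lambda)-\sum_i f_i(\lambda)\lambda_i$. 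Combined with the equation $f(\kappa_\Sigma)=\psi$, this gives $H_v=\sum_i f_i\lambda_i\le\psi$, as required.

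There is no real obstacle in this argument; the content of the lemma is essentially the opening observation about the $v$-dependence of $[a_{ij}]$, which is exactly the structural feature that makes $H$ better suited than $G$ for the continuity and degree arguments of Section \ref{3.1} (since the inequality $H_v\le\psi$ yields a uniform bound on $H_v$ along the deformations \eqref{3.5} and \eqref{3.7}, in sharp contrast with $G_u$ in \eqref{2.9}).
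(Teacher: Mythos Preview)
Your proof is correct and follows essentially the same route as the paper: compute $H_v=F^{ij}a_{ij}=\sum_i f_i\kappa_i$ from the observation that only the prefactor $e^v$ in \eqref{1.19} depends on $v$, then bound $\sum_i f_i\kappa_i\le f(\kappa)=\psi$ via concavity and \eqref{0.6}. The paper's proof is terser (it simply cites concavity for the last inequality), but your supporting-hyperplane argument with $\mu\to 0$ is the standard way to unpack that step.
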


  \begin{proof}
  From \eqref{1.19}-\eqref{1.21}, it is easily seen that
  \begin{equation}\label{3.3}
  H_v=F^{ij}a_{ij}.
  \end{equation}
  Recall the properties of $F$ introduced in Section \ref{1.18} and the concavity of $f$, we get
  \begin{equation}\label{3.4}
  H_v=f_i\kappa_i\leq f(\kappa_1,\kappa_2,...,\kappa_n)=\psi.
  \end{equation}
  \end{proof}

  \begin{lem}\label{3.9}
  For any $t\in[0,1]$, \eqref{3.5} has at most one strictly locally convex solution v, and $v\geq\underline{v}$.
  \end{lem}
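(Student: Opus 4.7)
The plan is to combine the strong maximum principle with a pointwise inequality that strengthens Lemma \ref{3.2}: for \emph{every} strictly locally convex triple $(\nabla^2 v,\nabla v,v)$, not only for solutions, one has $H_v\leq H$. Indeed, the computation in Lemma \ref{3.2} shows $H_v=\sum f_i\kappa_i$ identically, and concavity of $f$ together with $f=0$ on $\partial\Gamma_n^+$ yields $\sum f_i\kappa_i\leq f(\kappa)=H$ by the tangent-plane inequality at the origin, regardless of whether the equation holds. Both assertions of the lemma will be reduced to the same two-step comparison built on this bound and the ellipticity of $H$.

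For $v\geq\underline{v}$, I argue by contradiction: suppose $\underline{v}-v$ attains a positive maximum at an interior $x_0$, so that $\nabla v=\nabla\underline{v}$, $\nabla^2 v\geq\nabla^2\underline{v}$, and $\underline{v}(x_0)>v(x_0)$. Write the right-hand side of \eqref{3.5} as $\Psi(x,v)=C(x)e^{2v}$ with $C(x)=t\epsilon+(1-t)\underline{\psi}(x)/e^{2\underline{v}(x)}>0$. The first step uses ellipticity of $H$ in the Hessian slot (the segment between $\nabla^2\underline{v}$ and $\nabla^2 v$ stays in the elliptic cone because $\nabla v$ is frozen and the Hessians dominate $\nabla^2\underline{v}$) to give $H(\nabla^2\underline{v},\nabla\underline{v},v)\leq H(\nabla^2 v,\nabla v,v)=\Psi(x_0,v)$. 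The second step views $h(s):=H(\nabla^2\underline{v},\nabla\underline{v},s)$ as a scalar function of $s$; the bound $H_v\leq H$ becomes the ODE $h'\leq h$, so $e^{-s}h(s)$ is nonincreasing, and hence $H(\nabla^2\underline{v},\nabla\underline{v},v)\geq e^{v-\underline{v}}\underline{\psi}(x_0)$. Chaining these and simplifying yields $\underline{\psi}(x_0)\bigl(1-(1-t)e^{v-\underline{v}}\bigr)\leq t\epsilon\,e^{v+\underline{v}}$. Since $v<\underline{v}$ the left factor exceeds $t$, while $e^{v+\underline{v}}=u\,\underline{u}\leq K^2$ by Theorem \ref{1.22}; this forces $\underline{\psi}(x_0)<\epsilon K^2$ for $t>0$, contradicting \eqref{3.8}, and $\underline{\psi}(x_0)\leq 0$ for $t=0$, contradicting positivity of $\underline{\psi}$.

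Uniqueness will follow from the identical two-step estimate applied to two strictly locally convex solutions $v_1,v_2$. Since both agree with $\underline{v}$ on $\partial\Omega$, if $v_1\not\equiv v_2$ then after relabeling $v_1-v_2$ attains a positive maximum at some interior $x_0$, where $\nabla v_1=\nabla v_2$, $\nabla^2 v_1\leq\nabla^2 v_2$, and $v_1(x_0)>v_2(x_0)$. The Hessian step gives $\Psi(x_0,v_2)\geq H(\nabla^2 v_1,\nabla v_1,v_2)$, and the ODE step with $h(s)=H(\nabla^2 v_1,\nabla v_1,s)$ gives $H(\nabla^2 v_1,\nabla v_1,v_2)\geq e^{v_2-v_1}\Psi(x_0,v_1)$. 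Plugging in $\Psi(x_0,v)=C(x_0)e^{2v}$ with $C(x_0)>0$ collapses these to $e^{2v_2}\geq e^{v_1+v_2}$, i.e.\ $v_2(x_0)\geq v_1(x_0)$, contradicting positivity of the maximum.

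The hard part will be conceptual rather than computational: one must avoid the natural idea of comparing along a convex combination $sv_1+(1-s)v_2$, because strict local convexity is nonlinear in $v$ in the present formulation (the cross term $\nabla_iv\nabla_jv$ in $\delta_{ij}+\nabla_iv\nabla_jv+\nabla_{ij}v$). Splitting the comparison into a Hessian step (freezing $\nabla v$ and $v$) and a scalar-argument step (freezing $\nabla^2 v$ and $\nabla v$) sidesteps this entirely, since each single-variable path lies automatically in the elliptic cone at the critical point thanks to the gradient coincidence and the Hessian ordering. This is precisely why working with $H$ rather than $G$ pays off: the ratio $\Psi_v/\Psi=2$ coming from the explicit $e^{2v}$ factor strictly dominates the universal bound $H_v/H\leq 1$, and that gap is what powers the contradiction.
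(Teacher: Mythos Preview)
Your argument is correct and takes a genuinely different route from the paper's. The paper interpolates along the full convex combination $s\underline{v}+(1-s)v$ at the critical point $x_0$, defines $a(s)=H-\Psi$ along this path, uses the intermediate value theorem to locate $s_0$ with $a(s_0)=0$ and $a'(s_0)\geq 0$, and then invokes Lemma \ref{3.2} at that intermediate ``solution'' point to contradict the sign of $a'(s_0)$. You instead decouple the comparison into a Hessian step (monotonicity by ellipticity with gradient and value frozen) and a scalar step (the differential inequality $h'\leq h$ coming from the pointwise bound $H_v\leq H$, integrated via Gronwall). Your approach is more direct---no intermediate-value argument---and exploits neatly that $a_{ij}$ depends on the value of $v$ only through the scalar factor $e^v$, so the ODE step is trivially well-posed. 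The paper's route, on the other hand, is the standard linearize-along-a-path maneuver and generalizes without thought to right-hand sides more complicated than $C(x)e^{2v}$.

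Two small remarks. First, your final paragraph overstates the obstacle to the convex-combination approach: at $x_0$ the gradients coincide, so the nonlinear cross term $\nabla_iv\nabla_jv$ is constant along the path, and strict local convexity is immediate (this is exactly the paper's computation \eqref{3.10}). Your two-step decomposition is a clean alternative, but not a necessary one. Second, the appeal to Theorem \ref{1.22} for $u\leq K$ is slightly off, since that theorem assumes $u\geq\underline{u}$, which is what you are proving; but at $x_0$ you have $v<\underline{v}$, so $e^{v+\underline{v}}<e^{2\underline{v}}=\underline{u}^2\leq K^2$ already, and the citation is unnecessary.
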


  \begin{proof}
  We just give the proof that $v\geq\underline{v}$. The uniqueness follows almost the same argument. Suppose not, then $\underline{v}-v$ achieves positive maximum in some interior point $x_0\in\Omega$. We have
  \begin{equation} \label{3.16}
  \underline{v}(x_0)>v(x_0),\quad\nabla\underline{v}(x_0)=\nabla v(x_0),\quad \nabla^2\underline{v}(x_0)\leq\nabla^2v(x_0).
  \end{equation}
  Consider the deformation $s\underline{v}+(1-s)v$ near $x_0$,
  \begin{equation}\label{3.10}
  \begin{aligned}
  &\delta_{ij}+\nabla_i\left(s\underline{v}+(1-s)v\right)\nabla_j\left(s\underline{v}+(1-s)v\right)+\nabla_{ij}\left(s\underline{v}+(1-s)v\right)|_{x_0}\\
  =&\delta_{ij}+\nabla_i\underline{v}\nabla_j\underline{v}+\nabla_{ij}\underline{v}+(1-s)\nabla_{ij}(v-\underline{v})|_{x_0}\\
  >&0\quad\forall s\in[0,1].
  \end{aligned}
  \end{equation}
  So we can define a differentiable function on $[0,1]$,
  \begin{equation}\label{3.11}
  \begin{aligned}
  a(s):=&H\left(\nabla^2\left(s\underline{v}+(1-s)v\right),\nabla \left(s\underline{v}+(1-s)v\right),s\underline{v}+(1-s)v\right)(x_0)\\
  &-\left(t\epsilon+(1-t)\frac{\underline{\psi}(x_0)}{e^{2\underline{v}(x_0)}}\right)e^{2\left(s\underline{v}(x_0)+(1-s)v(x_0)\right)}.
  \end{aligned}
  \end{equation}
  Note
  \begin{equation}\label{3.12}
  \begin{aligned}
  a(0)&=H(\nabla^2v,\nabla v,v)(x_0)-\left(t\epsilon+(1-t)\frac{\underline{\psi}(x_0)}{e^{2\underline{v}(x_0)}}\right)e^{2v(x_0)}\\
  &=0,\\
  a(1)&=H(\nabla^2\underline{v},\nabla\underline{v},\underline{v})(x_0)-\left(t\epsilon+(1-t)\frac{\underline{\psi}(x_0)}{e^{2\underline{v}(x_0)}}\right)e^{2\underline{v}(x_0)}\\
  &=\underline{\psi}(x_0)-\left(\epsilon te^{2\underline{v}(x_0)}+(1-t)\underline{\psi}(x_0)\right)\\
  &=t\left(\underline{\psi}(x_0)-\epsilon e^{2\underline{v}(x_0)}\right)\\
  &\geq0.
  \end{aligned}
  \end{equation}
  Then there exists $s_0\in[0,1]$ such that $a(s_0)=0$, $a'(s_0)\geq0$, that is,
  \begin{equation}\label{3.13}
  \begin{aligned}
  &H\left(\nabla^2\left(s_0\underline{v}+(1-s_0)v\right),\nabla \left(s_0\underline{v}+(1-s_0)v\right),s_0\underline{v}+(1-s_0)v\right)(x_0)\\
  =&\left(t\epsilon+(1-t)\frac{\underline{\psi}(x_0)}{e^{2\underline{v}(x_0)}}\right)e^{2(s_0\underline{v}(x_0)+(1-s_0)v(x_0))},
  \end{aligned}
  \end{equation}
  \begin{equation}\label{3.14}
  \begin{aligned}
  &H^{ij}|_{s_0\underline{v}(x_0)+(1-s_0)v(x_0)}\nabla_{ij}(\underline{v}-v)(x_0)+H^i|_{s_0\underline{v}(x_0)+(1-s_0)v(x_0)}\nabla_i(\underline{v}-v)(x_0)\\
  &+\left(H_v|_{s_0\underline{v}(x_0)+(1-s_0)v(x_0)}-2\left(t\epsilon+(1-t)\frac{\underline{\psi}(x_0)}{e^{2\underline{v}(x_0)}}\right)e^{2\left(s_0\underline{v}(x_0)+(1-s_0)v(x_0)\right)}\right)(\underline{v}-v)(x_0)\geq0.
  \end{aligned}
  \end{equation}
  But Lemma \ref{3.2} and \eqref{3.13} imply
  \begin{equation}\label{3.15}
  H_v|_{s_0\underline{v}(x_0)+(1-s_0)v(x_0)}\leq\left(t\epsilon+(1-t)\frac{\underline{\psi}(x_0)}{e^{2\underline{v}(x_0)}}\right)e^{2(s_0\underline{v}(x_0)+(1-s_0)v(x_0))}.
  \end{equation}
  Combining with \eqref{3.16} and the ellipticity of $H$, we can see in the left hand side of \eqref{3.14} the first term is nonpositive, the second term 0 and the last term negative. Thus the left hand side is strictly less than 0, which is a contradiction.
  \end{proof}

  \begin{lem}\label{3.17}
  Let $v\geq\underline{v}$ be a strictly locally convex solution of \eqref{3.7}, then $v>\underline{v}$ in $\Omega$, $\mathbf{n}(v-\underline{v})>0$ on $\partial\Omega$, where $\mathbf{n}$ is the interior unit normal of $\partial\Omega$.
  \end{lem}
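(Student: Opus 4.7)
The plan is to handle the two conclusions $v > \underline{v}$ in $\Omega$ and $\mathbf{n}(v - \underline{v}) > 0$ on $\partial\Omega$ separately: the first by a pointwise maximum principle at the alleged touching point, and the second by the Hopf boundary-point lemma applied to a carefully chosen linearization.

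For the interior inequality I argue by contradiction. Suppose $v(x_0) = \underline{v}(x_0)$ at some $x_0 \in \Omega$. Since $v \geq \underline{v}$ with equality attained in the interior, the first and second order optimality conditions give $\nabla v(x_0) = \nabla \underline{v}(x_0)$ and $\nabla^2 v(x_0) \geq \nabla^2\underline{v}(x_0)$. Because the gradients already match, the straight-line interpolation from $\nabla^2 \underline{v}(x_0)$ to $\nabla^2 v(x_0)$ stays in the ellipticity cone of $H$, so monotonicity of $H$ in the Hessian slot gives $H(\nabla^2 v, \nabla v, v)(x_0) \geq H(\nabla^2\underline{v}, \nabla\underline{v}, \underline{v})(x_0) = \underline{\psi}(x_0)$. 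Since $X(x_0) = \bar X(x_0)$ and $e^{\underline{v}} \leq K$, the left-hand side is at most $t\psi(\bar X(x_0)) + (1-t)\epsilon K^2$, and combined with \eqref{3.8} this forces the inequality $(1-t)\psi(\bar X(x_0)) + t\epsilon K^2 < 0$, which is impossible.

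For the boundary inequality I put $w = v - \underline{v} \geq 0$ and aim, in a half-ball at an arbitrary $x_0 \in \partial\Omega$, to produce a uniformly elliptic linear inequality $L w \leq -c_0 < 0$. Concavity of $H$ in $\nabla^2 v$ (tangent-line inequality based at $\nabla^2 v$) combined with the mean value theorem in the lower-order arguments gives
\begin{equation*}
H(v) - H(\underline{v}) \geq H^{ij}(v)\,\nabla_{ij} w + \tilde{H}^k\,\nabla_k w + \tilde{H}_v\, w,
\end{equation*}
with $[H^{ij}(v)]$ uniformly positive definite because $v$ is strictly locally convex. The same computation as in the interior step yields $H(v) - H(\underline{v}) \leq -(1-t)\psi(\bar X) - t\epsilon K^2 \leq -c_0$ on $\partial\Omega$, hence also in a tubular neighborhood by continuity; because $w$ vanishes on $\partial\Omega$ and $\tilde{H}_v$ is bounded, the zero-order term $\tilde{H}_v w$ can be absorbed, leaving $L w \leq -c_0/2$ with $L := H^{ij}(v)\partial_{ij} + \tilde{H}^k\partial_k$ zero-order free. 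The smoothness of $\partial\Omega$ supplies an interior sphere at $x_0$, and the standard Hopf boundary-point lemma then forces $\mathbf{n} w(x_0) > 0$.

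The step I expect to require the most care is verifying that the concavity and mean value computations can actually be carried out, i.e., that the intermediate arguments of $H$ remain in its domain (the strictly locally convex region) throughout a neighborhood of $\partial\Omega$. This will use that $v \equiv \underline{v}$ on $\partial\Omega$, so $\nabla v$ and $\nabla \underline{v}$ differ there only in the normal direction, while the uniform strict local convexity of $\underline{v}$ provided by the $C^2$ bounds of Section~\ref{2.93} leaves enough room to absorb a controlled rank-one perturbation in this direction.
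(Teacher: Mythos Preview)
Your interior argument is correct and matches the paper's exactly: touching at an interior minimum forces $H(\nabla^2 v,\nabla v,v)(x_0)\ge H(\nabla^2\underline v,\nabla\underline v,\underline v)(x_0)=\underline\psi(x_0)$, while \eqref{3.8} makes the left side strictly smaller.

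For the boundary conclusion your Hopf-lemma route is genuinely different from the paper's, and as written it has a gap precisely at the step you flagged. The concavity/mean-value decomposition you use passes through the intermediate point $(\nabla^2\underline v,\nabla v,v)$, and for $H$ to be defined there you need
\[
\delta_{ij}+\nabla_i v\,\nabla_j v+\nabla_{ij}\underline v>0
\]
in a neighbourhood of $\partial\Omega$. You only know this with $\nabla\underline v$ in place of $\nabla v$; the correction $\nabla v\otimes\nabla v-\nabla\underline v\otimes\nabla\underline v$ is indefinite with a negative eigenvalue of size comparable to $|\mathbf n(v-\underline v)|\cdot|\nabla\underline v|$, which the $C^1$ estimates bound but do not make small. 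There is no reason this is dominated by the fixed strict-convexity constant of $\underline v$, so the ``absorb a controlled rank-one perturbation'' step does not go through in general. (A salvage is possible: argue by contradiction at a single $x_0$ with $\mathbf n(v-\underline v)(x_0)=0$, so that $\nabla v(x_0)=\nabla\underline v(x_0)$ and continuity gives the domain condition in a small half-ball about $x_0$; then Hopf applies there.)

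The paper avoids all of this with a two-line second-order argument. Assuming $\mathbf n(v-\underline v)(x_0)=0$ gives $\nabla v(x_0)=\nabla\underline v(x_0)$; the same inequality you used in the interior step then forbids $\nabla^2 v(x_0)\ge\nabla^2\underline v(x_0)$, so there is a direction $\xi$ with $\nabla_{\xi\xi}v(x_0)<\nabla_{\xi\xi}\underline v(x_0)$. Since $v=\underline v$ on $\partial\Omega$ and their full gradients agree at $x_0$, the tangential second derivatives of $v$ and $\underline v$ coincide at $x_0$, so $\xi$ has a nonzero normal component. Taylor expansion of $v-\underline v$ along the geodesic from $x_0$ in the direction $\pm\xi$ (chosen to enter $\Omega$) then gives $v<\underline v$ for small positive time, contradicting $v>\underline v$ in $\Omega$. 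This is both shorter and sidesteps the domain issue entirely.
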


  \begin{proof}
  We show both parts by contradiction. Suppose $v=\underline{v}$ at some point $x_0\in\Omega$, then $x_0$ is a local minimum of $v-\underline{v}$. So we have
  \begin{equation}\label{3.18}
  v(x_0)=\underline{v}(x_0),\quad\nabla v(x_0)=\nabla\underline{v}(x_0),\quad\nabla^2v(x_0)\geq\nabla^2\underline{v}(x_0).
  \end{equation}
  Then by the formula \eqref{1.19} for $[a_{ij}]$, clearly
  \begin{equation}\label{3.19}
  a_{ij}[v](x_0)\geq a_{ij}[\underline{v}](x_0).
  \end{equation}
  So
  \begin{equation}\label{3.20}
  H(\nabla^2v,\nabla v,v)(x_0)=F\left(a_{ij}[v](x_0)\right)\geq F\left(a_{ij}[\underline{v}](x_0)\right)=H(\nabla^2\underline{v},\nabla \underline{v},\underline{v})(x_0).
  \end{equation}
  However, by the choice of $\epsilon$ in \eqref{3.8}, we can see
  \begin{equation}\label{3.21}
  \begin{aligned}
  &H(\nabla^2v,\nabla v,v)(x_0)\\
  =&t\psi(X(x_0))+(1-t)\epsilon e^{2v(x_0)}\\
  =&t\psi(\bar X(x_0))+(1-t)\epsilon e^{2v(x_0)}\\
  <&\underline{\psi}(x_0)\\
  =&H(\nabla^2\underline{v},\nabla \underline{v},\underline{v})(x_0),
  \end{aligned}
  \end{equation}
  which is a contradiction.

  From the fact $v\geq\underline{v}$ we can see $\mathbf{n}(v-\underline{v})\geq0$ on $\partial\Omega$. Suppose $\mathbf{n}(v-\underline{v})=0$ at some point $x_0\in\partial\Omega$. Then by $v=\underline{v}$ on $\partial\Omega$ we get $\nabla v(x_0)=\nabla\underline{v}(x_0)$. From \eqref{3.21} we can see $a_{ij}[v](x_0)\geq a_{ij}[\underline{v}](x_0)$ can not hold, and thus $\nabla^2v(x_0)\geq\nabla^2\underline{v}(x_0)$ can not hold. There exists unit vector $\xi\in T_{x_0}\mathbb{S}^n$ such that $\nabla_{\xi\xi}v(x_0)<\nabla_{\xi\xi}\underline{v}(x_0)$. Again by the fact that $v=\underline{v}$ on $\partial\Omega$ and $\nabla v(x_0)=\nabla\underline{v}(x_0)$, $\xi$ can not be tangential to $\partial\Omega$ at $x_0$. We can assume $\xi$ (or $-\xi$) pointing to the interior of $\Omega$. Let $c(t)$ be the normalized geodesic starting at $x_0$ in the direction of $\xi$. In a short time $c$ stays inside $\Omega$. We compare $v(c(t))$ and $\underline{v}(c(t))$,
  \begin{equation}\label{3.22}
  \begin{aligned}
  v\circ c(0)&=\underline{v}\circ c(0),\\
  (v\circ c)'(0)=\nabla_\xi v(x_0)&=\nabla_\xi\underline{v}(x_0)=(\underline{v}\circ c)'(0),\\
  (v\circ c)''(0)=\nabla_{\xi\xi} v(x_0)&<\nabla_{\xi\xi}\underline{v}(x_0)=(\underline{v}\circ c)''(0).
  \end{aligned}
  \end{equation}
  Therefore in a short time $v(c(t))<\underline{v}(c(t))$. But it is contradicted with $v>\underline{v}$ in $\Omega$.
  \end{proof}

  Now we are ready to prove the existence of solution for \eqref{3.5}, and then \eqref{3.7}.

  \begin{thm}\label{3.23}
  For any $t\in[0,1]$, \eqref{3.5} has a unique strictly locally convex solution.
  \end{thm}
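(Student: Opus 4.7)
The plan is to solve \eqref{3.5} by the classical method of continuity. I set
\[
S = \{ t \in [0,1] : \text{\eqref{3.5} admits a smooth, strictly locally convex solution}\},
\]
and aim to show $S$ is nonempty, open, and closed in $[0,1]$, so that $S=[0,1]$. Uniqueness of the solution at each $t$ is already provided by Lemma \ref{3.9}, so I only need to produce existence.

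For nonemptiness, I observe that $v = \underline{v}$ solves \eqref{3.5} at $t=0$: the left side equals $H(\nabla^2 \underline{v}, \nabla \underline{v}, \underline{v}) = \underline{\psi}(x)$, while the right side reduces to $(\underline{\psi}(x)/e^{2\underline{v}(x)}) \cdot e^{2\underline{v}(x)} = \underline{\psi}(x)$. So $0 \in S$.

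Closedness will follow from the a priori machinery of Section \ref{2.93}. Lemma \ref{3.9} forces any solution to satisfy $v \geq \underline{v}$, hence $u = e^v \geq \underline{u}$ is a strictly locally convex solution of an equation of type \eqref{1.14} whose right-hand side, rewritten in terms of $X$, is a smooth positive function on $\Lambda_K$ with uniform control. Theorem \ref{2.91} then gives uniform $C^2$ bounds and two-sided bounds on the principal curvatures, hence uniform ellipticity. Because $H$ is concave in $\nabla^2 v$, Evans--Krylov combined with Schauder theory will upgrade these to uniform $C^{k,\alpha}$ bounds, and Arzel\`a--Ascoli extracts a strictly locally convex limit along any sequence $t_n \to t_0$ in $S$.

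The main obstacle is openness at a given $t_0 \in S$, and this is where Lemma \ref{3.2} is essential. Writing $\psi_t(x,v) := (t\epsilon + (1-t)\underline{\psi}(x)/e^{2\underline{v}(x)}) e^{2v}$ for the right-hand side of \eqref{3.5}, the linearization of $v \mapsto H(\nabla^2 v, \nabla v, v) - \psi_t(x,v)$ at a solution is
\[
\mathcal{L} w = H^{ij} w_{ij} + H^s w_s + (H_v - 2 \psi_t) w.
\]
By Lemma \ref{3.2}, $H_v \leq \psi_t$ at the solution, so the zero-order coefficient satisfies $H_v - 2\psi_t \leq -\psi_t < 0$. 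Combined with the uniform ellipticity just established, this makes $\mathcal{L}$ an isomorphism from $\{w \in C^{2,\alpha}(\bar\Omega) : w|_{\partial\Omega} = 0\}$ onto $C^{\alpha}(\bar\Omega)$ by Fredholm theory, and the implicit function theorem produces solutions of \eqref{3.5} for $t$ near $t_0$. This is also precisely why the author switched from \eqref{1.14} to \eqref{1.21} in this section: the analogous sign on $G_u$ fails (compare \eqref{2.9} and \eqref{2.19}), so $G$ cannot drive the continuity argument. Once openness is in hand, $S=[0,1]$ and Theorem \ref{3.23} follows.
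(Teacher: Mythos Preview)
Your proposal is correct and follows essentially the same approach as the paper: continuity method with $t=0$ solved by $\underline{v}$, openness via the implicit function theorem using Lemma~\ref{3.2} to make the zero-order coefficient of the linearization strictly negative, and closedness via Lemma~\ref{3.9} (to secure $v\geq\underline{v}$) together with Theorem~\ref{2.91} and Evans--Krylov. The paper carries this out in the same order and with the same key ingredients; your remark about why $H$ rather than $G$ is used here also matches the paper's motivation.
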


  \begin{proof}
  Uniqueness is already proved in Lemma \ref{3.9}. We just prove the existence, with the standard continuity method.

  We shall first establish a priori estimates for \eqref{3.5}. Since \eqref{1.14} and \eqref{3.5} are related by changing variable $u=e^v$ and note that we also have positive lower bound for $u$, Theorem \ref{2.91} directly gives $C^2$ estimates for strictly locally convex solutions $v$ with $v\geq\underline{v}$. The uniform positive upper and lower bounds for principal curvatures ensure that \eqref{3.5} is uniformly elliptic for strictly locally convex solutions $v$ with $v\geq\underline{v}$. We also pointed out $H$ is a concave operator. Then by the Evans-Krylov estimates \cite{evans} and \cite{krylov}, we get $C^{2,\alpha}$ estimates for some $\alpha\in(0,1)$, that is
  \begin{equation}\label{3.24}
  \|v\|_{C^{2,\alpha}(\bar\Omega)}\leq C.
  \end{equation}
  Here we shall note $C$ is independent of $t$.

  Let $C_0^{2,\alpha}(\bar\Omega)$ be the subspace of $C^{2,\alpha}(\bar\Omega)$ consisting of functions vanishing on the boundary. Consider $\mathcal{U}=\{w\in C_0^{2,\alpha}(\bar\Omega)|w+\underline{v}\text{ is strictly locally convex}\}$, which is open in $C_0^{2,\alpha}(\bar\Omega)$. Construct a map $L$ from $\mathcal{U}\times[0,1]$ to $C^\alpha(\bar\Omega)$ by
  \begin{equation}\label{3.25}
  L[w,t]=H\left(\nabla^2(w+\underline{v}),\nabla(w+\underline{v}),w+\underline{v}\right)-\left(t\epsilon+(1-t)\frac{\underline{\psi}(x)}{e^{2\underline{v}}}\right)e^{2(w+\underline{v})}.
  \end{equation}
  Set $S=\{t\in[0,1]|L[w,t]=0\:\text{has a solution in }\mathcal{U}\}$.

  First, $L[0,0]=0$ since $\underline{v}$ is clearly a solution of \eqref{3.5} when $t=0$. So $0\in S$ and $S$ is not empty.

  Second, for any $t_0\in S$, there exists $w_0\in\mathcal{U}$ such that $L[w_0,t_0]=0$. The Fr\'echet derivative of $L$ with respect to $w$ at $(w_0,t_0)$ is a linear elliptic operator from $C_0^{2,\alpha}(\bar\Omega)$ to $C^\alpha(\bar\Omega)$,
  \begin{equation}\label{3.26}
  \begin{aligned}
  &L_w|_{(w_0,t_0)}(h)\\
  =&H^{ij}|_{w_0+\underline{v}}\nabla_{ij}h+H^i|_{w_0+\underline{v}}\nabla_ih
  +\left(H_v|_{w_0+\underline{v}}-2\left(t_0\epsilon+(1-t_0)\frac{\underline{\psi}(x)}{e^{2\underline{v}}}\right)e^{2(w_0+\underline{v})}\right)h.
  \end{aligned}
  \end{equation}
  By Lemma \ref{3.2}, we can see $H_v|_{w_0+\underline{v}}-2\left(t_0\epsilon+(1-t_0)\frac{\underline{\psi}(x)}{e^{2\underline{v}}}\right)e^{2(w_0+\underline{v})}<0$. Therefore by standard elliptic theory $L_w|_{(w_0,t_0)}$ is invertible. By implicit function theory, a neighbourhood of $t_0$ is contained in $S$. $S$ is open in $[0,1]$.

  Third, let ${t_i}$ be a sequence in $S$ converging to $t_0\in[0,1]$ and $w_i$ the corresponding solution with respect to $t_i$. By Lemma \ref{3.9}, $w_i\geq0$. Then we can apply estimates \eqref{3.24} to see $v_i=w_i+\underline{v}$ is a bounded sequence in $C^{2,\alpha}(\bar\Omega)$. Sending $t_i$ to $t_0$, passing to a subsequence if necessary,  we get a limit function $v_0$ which is the solution of \eqref{3.5} at $t_0$. From the uniform upper and lower bounds for principal curvature which are independent of $t$, we can see $v_0$ is strictly locally convex. Above all, $w_0=(v_0-\underline{v})\in\mathcal{U}$ and $L[w_0,t_0]=0$. So $t_0\in S$ and $S$ is closed in $[0,1]$.

  We proved $S$ is a nonempty and both open and closed subset of $[0,1]$. Therefore $S=[0,1]$. \eqref{3.5} has a strictly locally convex solution for any $t\in[0,1]$.
  \end{proof}

  \begin{thm}\label{3.27}
  For any $t\in[0,1]$, \eqref{3.7} has a strictly locally convex solution. In particular, \eqref{0.2}-\eqref{0.3} has a strictly locally convex solution.
  \end{thm}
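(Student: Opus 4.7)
The plan is to apply the Leray--Schauder degree theory of \cite{degreethm} to the one-parameter family \eqref{3.7}. A continuity argument analogous to Theorem~\ref{3.23} is not directly available: linearizing \eqref{3.7} at a solution produces a zeroth-order coefficient of the form $H_v + tX\cdot\tilde\nabla\psi - 2(1-t)\epsilon e^{2v}$, which by Lemma~\ref{3.2} is at most $t[\psi + X\cdot\tilde\nabla\psi] - (1-t)\epsilon e^{2v}$, an expression with no definite sign once $t > 0$. Fortunately the starting end of the homotopy is fixed: at $t = 0$ equation \eqref{3.7} coincides with \eqref{3.5} at $t = 1$, so Theorem~\ref{3.23} furnishes a unique strictly locally convex solution $v_0$ there, and Lemmas~\ref{3.9} and \ref{3.17} give $v_0 > \underline{v}$ in $\Omega$ with $\mathbf{n}(v_0 - \underline{v}) > 0$ on $\partial\Omega$.

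For the a~priori estimates, suppose $v$ is any strictly locally convex solution of \eqref{3.7} with $v \geq \underline{v}$. Substituting $u = e^v$ converts \eqref{3.7} into an instance of \eqref{1.14} whose right-hand side is the smooth bounded function $t\psi(X) + (1-t)\epsilon u^2$ on $\Lambda_K$. Theorem~\ref{2.91} then yields $\|u\|_{C^2(\bar\Omega)} \leq C$ together with $C^{-1} \leq \kappa_i \leq C$, with $C$ uniform in $t \in [0,1]$, and the Evans--Krylov theorems upgrade these bounds to $\|v\|_{C^{2,\alpha}(\bar\Omega)} \leq C$.

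Fixing $R$ larger than this estimate constant, I would set
\begin{equation*}
\mathcal{O} = \{\, w \in C^{2,\alpha}_0(\bar\Omega) : w+\underline{v} \text{ strictly locally convex},\ \|w\|_{C^{2,\alpha}} < R,\ w > 0 \text{ in } \Omega,\ \mathbf{n}w > 0 \text{ on } \partial\Omega \,\}
\end{equation*}
and
\begin{equation*}
F_t(w) = H\bigl(\nabla^2(w+\underline{v}), \nabla(w+\underline{v}), w+\underline{v}\bigr) - t\psi(X[w+\underline{v}]) - (1-t)\epsilon e^{2(w+\underline{v})}.
\end{equation*}
The structural point is that every $w \in \bar{\mathcal{O}}$ satisfies $w \geq 0$ pointwise, so a zero of $F_t$ in $\bar{\mathcal{O}}$ is automatically a solution $v = w+\underline{v} \geq \underline{v}$ to which the estimates above and Lemma~\ref{3.17} apply. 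The estimates force $\|w\|_{C^{2,\alpha}} < R$ strictly and the principal curvatures to stay uniformly positive, while Lemma~\ref{3.17} forces $w > 0$ in $\Omega$ and $\mathbf{n}w > 0$ on $\partial\Omega$; all defining conditions are strict, so no zero of $F_t$ lies on $\partial\mathcal{O}$ for any $t \in [0,1]$.

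At $t = 0$, the unique zero $w_0 = v_0 - \underline{v}$ lies in $\mathcal{O}$, and by Lemma~\ref{3.2} applied to $v_0$ we have $H_v \leq \epsilon e^{2v_0}$, so the Fr\'echet derivative
\begin{equation*}
F_0'(w_0)(h) = H^{ij}\nabla_{ij}h + H^i\nabla_i h + \bigl(H_v - 2\epsilon e^{2v_0}\bigr)h
\end{equation*}
is a linear elliptic operator with strictly negative zeroth-order coefficient, hence an isomorphism $C^{2,\alpha}_0(\bar\Omega) \to C^\alpha(\bar\Omega)$; this gives $\deg(F_0, \mathcal{O}, 0) = \pm 1$. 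Homotopy invariance yields $\deg(F_1, \mathcal{O}, 0) \neq 0$, producing $w_1 \in \mathcal{O}$ with $F_1(w_1) = 0$, so that $v = w_1 + \underline{v}$ solves \eqref{3.7} at $t = 1$, equivalently \eqref{0.2}--\eqref{0.3}. The main obstacle is the no-boundary-solutions verification, because I have no direct tool to rule out solutions of \eqref{3.7} violating $v \geq \underline{v}$ (for which neither Theorem~\ref{2.91} nor Lemma~\ref{3.17} is available); I circumvent this by building the condition $w \geq 0$ into the closure of $\mathcal{O}$, so that any such hypothetical rogue solution would simply lie outside $\bar{\mathcal{O}}$ and play no role in the degree computation.
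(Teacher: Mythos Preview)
Your strategy is essentially the paper's: use Y.Y.~Li's degree theory on the homotopy \eqref{3.7}, anchor at $t=0$ via Theorem~\ref{3.23}, compute the degree there from invertibility of the linearization (Lemma~\ref{3.2} makes the zeroth-order coefficient strictly negative), and invoke Lemma~\ref{3.17} together with the a~priori estimates to exclude zeros on $\partial\mathcal{O}$.

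Two technical points where the paper is more careful and your setup should be adjusted. First, the paper works in $C^{4,\alpha}$ rather than $C^{2,\alpha}$, bootstrapping via Schauder theory after Evans--Krylov; it states explicitly that this higher regularity is what the framework of \cite{degreethm} requires, with $M_t:\bar{\mathcal O}\subset C_0^{4,\alpha}\to C^{2,\alpha}$. Second, and more substantively, your $\mathcal{O}$ imposes only ``$w+\underline v$ strictly locally convex,'' so on $\bar{\mathcal O}$ the matrix $[\delta_{ij}+\nabla_i(w+\underline v)\nabla_j(w+\underline v)+\nabla_{ij}(w+\underline v)]$ may degenerate and $H$ ceases to be uniformly elliptic there; Li's degree is defined for operators uniformly elliptic on the closure. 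The paper handles this by building the two-sided bound $C_2^{-1}I<[\cdots]<C_2 I$ into the definition of $\mathcal O$, where $C_2$ comes from \eqref{3.29}, so that $M_t$ is uniformly elliptic on all of $\bar{\mathcal O}$ and any solution touching this face of $\partial\mathcal O$ is ruled out by the curvature estimates in Theorem~\ref{2.91}. With these two adjustments your argument matches the paper's.
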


  \begin{proof}
  Similarly as in the proof of Theorem \ref{3.23}, we get $C^{2,\alpha}$ estimates for strictly locally convex solutions of \eqref{3.7} with $v\geq\underline{v}$. Then by the standard regularity theory for second order uniformly elliptic equations, we can get any higher order estimates. Here we need $C^{4,\alpha}$ estimates for applying the degree theory in \cite{degreethm}, that is,
  \begin{equation}\label{3.28}
  \|v\|_{C^{4,\alpha}(\bar\Omega)}<C_1.
  \end{equation}
   We also need to describe the uniform bounds for principal curvatures more specifically, that is,
  \begin{equation}\label{3.29}
  C_2^{-1}I<[\delta_{ij}+\nabla_iv\nabla_jv+\nabla_{ij}v]<C_2I\text{ in }\bar\Omega.
  \end{equation}
   We shall note that both $C_1$ and $C_2$ are uniformly positive constants which are independent of $t$.

  Let $C_0^{4,\alpha}(\bar\Omega)$ be the subspace of $C^{4,\alpha}(\bar\Omega)$ consisting of functions vanishing on the boundary. Consider $\mathcal{O}=\{w\in C_0^{4,\alpha}(\bar\Omega)|w>0\text{ in }\Omega,\:\mathbf{n}w>0\text{ on }\partial\Omega,\:C_2^{-1}I<[\delta_{ij}+\nabla_i(w+\underline{v})\nabla_j(w+\underline{v})+\nabla_{ij}(w+\underline{v})]<C_2I\text{ in }\bar\Omega,\:\|w\|_{C^{4,\alpha}(\bar\Omega)}<C_1+\|\underline{v}\|_{C^{4,\alpha}(\bar\Omega)}\}$, where $C_1$ and $C_2$ are as in \eqref{3.28} and \eqref{3.29} and $\mathbf{n}$ is the unit interior normal on $\partial\Omega$. $\mathcal{O}$ is a bounded open subset of $C_0^{4,\alpha}(\bar\Omega)$. Construct a map from $\mathcal{O}\times[0,1]$ to $C^{2,\alpha}(\bar\Omega)$:
  \begin{equation}\label{3.30}
  M_t[w]=H\left(\nabla^2(w+\underline{v}),\nabla(w+\underline{v}),w+\underline{v}\right)-t\psi(e^{-(w+\underline{v})}x)-(1-t)\epsilon e^{2(w+\underline{v})}
  \end{equation}
   From Theorem \ref{3.23}, let $v^0$ be the unique solution of \eqref{3.5} at $t=1$. Set $w^0=v^0-\underline{v}$. By Lemma \ref{3.9}, $w^0\geq0$. Then by Lemma \ref{3.17}, $w^0>0$ in $\Omega$ and $\mathbf{n}w^0>0$ on $\partial\Omega$. Also clearly $v^0$ satisfies \eqref{3.28} and \eqref{3.29}. Therefore $w^0\in\mathcal{O}$.

    It is easy to check that $M_t[w]=0$ has no solution on $\partial\mathcal{O}$. Namely, if $w$ with $C^{4,\alpha}$ norm $C_1+\|\underline{v}\|_{C^{4,\alpha}(\bar\Omega)}$ is a solution, it contradicts with estimate \eqref{3.28}. If $w=0$ at some interior point or $\mathbf{n}w=0$ at some boundary point, it will contradict with Lemma \ref{3.17}. If $[\delta_{ij}+\nabla_i(w+\underline{v})\nabla_j(w+\underline{v})+\nabla_{ij}(w+\underline{v})]$ achieves eigenvalue $C_2$ or $C_2^{-1}$ at some point, it would contradict with the estimates \eqref{3.29}. Above all, $M_t[w]=0$ has no solution on $\partial\mathcal{O}$ for any $t$. We also note $M_t$ is uniformly elliptic on $\mathcal{O}$, independent of t. Therefore, the degree of $M_t$ on $\mathcal{O}$ at $0$ $\text{deg}(M_t,\mathcal{O},0)$ is well defined and independent of $t$.

   Now we compute $\text{deg}(M_0,\mathcal{O},0)$. $M_0[w]=0$ has a unique solution $w^0$ in $\mathcal{O}$. The Fr\'echet derivative of $M_0$ at $w^0$ is a linear elliptic operator from $C^{4,\alpha}_0(\bar\Omega)$ to $C^{2,\alpha}(\bar\Omega)$,
   \begin{equation}\label{3.31}
   M_{0,w^0}(h)=H^{ij}|_{v^0}\nabla_{ij}h+H^i|_{v^0}\nabla_ih+\left(H_v|_{v^0}-2\epsilon e^{2v^0}\right)h.
   \end{equation}
   By Lemma \ref{3.2}, $H_v|_{v^0}-2\epsilon e^{2v^0}<0$. So $M_{0,w^0}$ is invertible. By the theory in \cite{degreethm}, we can see
   \begin{equation}\label{3.32}
   \text{deg}(M_0,\mathcal{O},0)=\text{deg}(M_{0,w^0},B_1,0)=\pm1\neq0,
   \end{equation}
   where $B_1$ is the unit ball of $C^{4,\alpha}_0(\bar\Omega)$. Therefore
   \begin{equation}\label{3.33}
   \text{deg}(M_t,\mathcal{O},0)\neq0\text{ for all }t\in[0,1].
   \end{equation}
   \eqref{3.7} has at least one strictly locally convex solution for any $t\in[0,1]$. In particular, when $t=1$, it solves \eqref{0.2}-\eqref{0.3}.
  \end{proof}

  \newpage

  \hspace{3cm}
  
\noindent 
Department of Mathematics, Johns Hopkins University, 3400 N. Charles ST, Baltimore, MD 21218\\
Email: csu8@jhu.edu
  

\begin{thebibliography}{9}
  
\bibitem{CNS1}
Caffarelli, L.; Nirenberg, L.; Spruck, J.,
\emph{The Dirichlet problem for nonlinear second-order elliptic equations I. Monge-Amp{\`e}re equation},
Comm. Pure Appl. Math. 37 (1984), no. 3, 369-402.

\bibitem{CNS3}
Caffarelli, L.; Nirenberg, L.; Spruck, J.,
\emph{The Dirichlet problem for nonlinear second-order elliptic equations. III. Functions of the eigenvalues of the Hessian},
Acta Math. 155 (1985), no. 3-4, 261-301. 
  
  
  \bibitem{clarkesmith}
Clarke, A.; Smith, G.,
\emph{The Perron method and the non-linear Plateau problem},
Geom. Dedicata. 163(2013), 159-164.
  
  \bibitem{evans}
  Evans, L.C.,
  \emph{Classical solutions of fully nonlinear, convex, second order elliptic equations},
  Comm. Pure Appl. Math. 35 (1982), 333-363. 
  
  \bibitem{ghomiphdthesis}
Ghomi, M.,
\emph{Strictly convex submanifolds and hypersurfaces of positive curvature},
J. Differential Geom. 57(2001), no. 2, 239-271.
  
  \bibitem{trudinger}
  Gilbarg, D.; Trudinger, N.S.,
  \emph{Elliptic partial differential equations of second order}, 2nd edition,
  Springer-Verlag, New York, 1983.
  
  \bibitem{Guannonconvex}
Guan, B.,
\emph{The Dirichlet problem for Monge-Amp\`ere equations in non-convex domains and spacelike hypersurfaces of constant Gauss curvature},
Trans. Amer. Math. Soc. 350(1998), no. 12, 4955-4971.
  
  \bibitem{radialgs}
  Guan, B.; Spruck, J.,
  \emph{Boundary-value problem on $S^n$ for surfaces of constant Gauss curvature},
  Ann. of Math. (2) 138 (1993), no. 3, 601-624.

\bibitem{gsgaussperron} 
Guan, B.; Spruck, J.,
\emph{The existence of hypersurfaces of constant Gauss curvature with prescribed boundary},
J. Differential Geom. 62(2002), no. 2, 259-287.

  \bibitem{locallyconvexgs}
  Guan, B.; Spruck, J.,
  \emph{Locally convex hypersurfaces of constant curvature with boundary},
  Comm. Pure Appl. Math. 57 (2004), 1311-1331.
  

  
  \bibitem{heijenoort}
van Heijenoort, J.,
\emph{On locally convex manifolds},
Comm. Pure Appl. Math. 5(1952), 223-242.
  
  \bibitem{hoffmanrosenbergspruck}
Hoffman, D.; Rosenberg, H.; Spruck, J.,
\emph{Boundary value problems for surfaces of constant Gauss curvature},
Comm. Pure Appl. Math. 45(1992), no. 8, 1051-1062.

  
  \bibitem{krylov}
  Krylov, N.V.,
  \emph{Boundedly nonhomogeneous elliptic and parabolic equations in a domain},
  Izvestia Math. Ser. 47 (1983), 75-108.
  
  \bibitem{degreethm}
  Li, Y.Y.,
  \emph{Degree theory for second order nonlinear elliptic operators and its applications},
  Comm. in PDE's. 14(11) (1989), 1541-1579. 
  
  \bibitem{rosenberg}
  Rosenberg, H.,
  \emph{Hypersurfaces of constant curvature in space forms},
  Bull. Sci. Math. 117(1993), no. 2, 211-239.

\bibitem{schoensimonyau}
Schoen, R.; Simon, L.; Yau, S.T.,
\emph{Curvature estimates for minimal hypersurfaces},
Acta Math. 134 (1975), 275-288.

\bibitem{smith1}
Smith, G.,
\emph{The Plateau problem for general curvature functions},
arXiv:1008.3545

\bibitem{smith2}
Smith, G.,
\emph{Compactness results for immersions of prescribed Gaussian curvature II - geometric aspects},
Geom. Dedicata 172, no. 1, (2014), 303-350.

\bibitem{trudingerwang}
Trudinger, N.S.; Wang, X.-J.,
\emph{On locally convex hypersurfaces with boundary}, 
J. Reine Angew. Math. 551(2002), 11-32.

  \end{thebibliography}
\end{document}